\newcommand\N{\mathbb{N}}
\newcommand\R{\mathbb{R}}
\newcommand\Q{\mathbb{Q}}
\newcommand\Z{\mathbb{Z}}
\newcommand{\II}{\mathrm{I\!I}}
\theoremstyle{plain}
\newtheorem{theoremAlph}{Theorem}
\newtheorem{corollaryAlph}[theoremAlph]{Corollary}
\newtheorem{theorem}{Theorem}[section]
\newtheorem{lemma}[theorem]{Lemma}
\newtheorem{corollary}[theorem]{Corollary}
\newtheorem{proposition}[theorem]{Proposition}
\newtheorem{observation}[theorem]{Observation}
\theoremstyle{definition}
\newtheorem{definition}[theorem]{Definition}
\theoremstyle{remark}
\newtheorem{remark}[theorem]{Remark}
\theoremstyle{definition}
\newcommand{\subjclass}[2][1991]{%
	\let\@oldtitle\@title%
	\gdef\@title{\@oldtitle\footnotetext{#1 \emph{Mathematics subject classification.} #2}}%
}
\begin{document}

\renewcommand{\labelenumi}{(\roman{enumi})}
\renewcommand{\labelenumii}{(\alph{enumii})}

\title{A generalization of the Perelman gluing theorem and applications}
\author{Philipp Reiser\thanks{The author acknowledges funding by the SNSF-Project 200020E\textunderscore 193062 and the DFG-Priority programme SPP 2026.} \ and David J. Wraith}
\subjclass[2010]{53C20}

\maketitle

\begin{abstract}
	
We extend a positive Ricci curvature gluing theorem of Perelman to a range of positive intermediate curvature conditions, ranging from positive scalar curvature up to (and including) positive sectional curvature. As an application of this, we demonstrate that the observer moduli space of metrics with positive intermediate Ricci curvatures can have non-trivial higher homotopy groups. Further applications include deriving a sufficient condition for the existence of a metric with positive intermediate Ricci curvature and totally geodesic boundary.
	
\end{abstract}

\section{Introduction}\label{intro}

This paper concerns positive {\it intermediate curvatures}, in the following sense. Recall that a symmetric bilinear map $A\colon V\times V\to \R$ on a Euclidean vector space $(V,\langle\cdot,\cdot\rangle)$ is \emph{$k$-positive}, if for every collection $e^1,\dots,e^k$ of orthonormal vectors the sum $\sum_{i=1}^{k}A(e^i,e^i)$ is positive. 
\begin{definition}
	Let $(M,g)$ be a Riemannian manifold of dimension $n$.
		\begin{enumerate}
			\item Given an integer $1\le k \le n-1$, we say that $(M,g)$ has \emph{positive $k^{th}$-intermediate Ricci curvature}, denoted $Ric_k>0$, if for all $p\in M$ and all collections $v,e^1...,e^k$ of orthonormal vectors in $T_pM$, the sum $\sum_{i=1}^k K(v,e^i)$ is positive, where $K$ denotes the sectional curvature.
			\item Given an integer $1\leq k\le n$, we say that $(M,g)$ has \emph{$k$-positive Ricci curvature}, denoted $Sc_k>0$, if for all $p\in M$ the Ricci tensor $Ric$ on $T_p M$ is $k$-positive.
		\end{enumerate}
\end{definition}

The notions of non-negative $k^{th}$-intermediate Ricci curvature and $k$-non-negative Ricci curvature are defined analogously. Note that for an $n$-dimensional manifold, the condition $Ric_k>0$ interpolates between positive sectional curvature (when $k=1$) and positive Ricci curvature (when $k=n-1$), and the condition $Sc_k>0$ interpolates between positive Ricci curvature (when $k=1$) and positive scalar curvature (when $k=n$).

Although these notions of curvature have been present in the literature for years (decades in the case of intermediate Ricci curvatures), there has been a dramatic increase in interest in recent times. There is now a rich collection of papers which feature intermediate curvatures. For an up-to-date list of these works see \cite{Mo}.

The question of which manifolds admit a Riemannian metric of $Ric_k>0$ or $Sc_k>0$ for some $k$ is, on the whole, wide open as there is no topological obstruction known for the existence of a metric of $Ric_k>0$ (resp.\ $Sc_k>0$) with $k>1$ on a closed manifold that does not already hold for $Ric>0$ (resp.\ $scal>0$). Only in the presence of a non-empty boundary it is known that the conditions $Ric_k>0$ have certain topological implications which vary with $k$, see \cite{Wu}. Examples of Riemannian manifolds of $Ric_k>0$ have been constructed in \cite{AQZ} and \cite{DGM} by considering homogeneous spaces, and in \cite{RW1,RW2} by using bundle and surgery techniques. For the condition $Sc_k>0$, examples can be constructed using the Gromov--Lawson type surgery result established in \cite{Wo}. See also \cite{CW}.
\smallskip

The main aims of this paper are twofold. Firstly, we provide a far-reaching generalization of a gluing result for positive Ricci curvature due to Perelman \cite{Pe2}. This is Theorem \ref{main} below. Secondly, we use this result to  study the topology of the {\it observer moduli space} of Riemannian metrics with positive intermediate Ricci curvature, and provide the first examples of non-triviality in the higher homotopy groups of such moduli spaces. (Theorem \ref{obs}.)

In order to study the topological implications of a curvature condition, a basic question to ask is how this condition behaves with respect to simple topological operations. Perhaps the most basic topological operation is that of gluing together of two manifolds with boundary, as for example, one is forced to do when performing the connected sum construction. It is clear that the boundaries being glued must be diffeomorphic, and from a metric point of view, a basic pre-requisite is that the restricted metrics on the two boundary components must be isometric. One then needs to smooth the glued metric in such a way that the curvature condition is preserved. It is unreasonable to expect that this will be possible in general, however Perelman showed that if the sum of the second fundamental forms at the boundary is positive definite, such gluing is possible within positive Ricci curvature. This result has since become an essential tool in the Riemannian geometer's kit. In Theorem \ref{main} we show that Perelman's result can be generalized in a natural way to all the above positive intermediate curvature conditions, from positive scalar curvature up to and including positive sectional curvature.

The topology of spaces and moduli spaces of Riemannian metrics satisfying various types of curvature condition has been a major theme in Riemannian geometry for more than a decade. Most results have focused on spaces (as opposed to moduli spaces) of metrics, as these have generally been more accessible to study. More elusive, and perhaps more desirable, are results about moduli spaces of metrics, i.e. the quotient of the space of metrics under the action (by pull-back) of a diffeomorphism group. An important variant of the standard moduli space (the quotient by the full diffeomorphism group) is the observer moduli space (defined below). This offers certain technical advantages over the usual moduli space, and on this basis one can assert its naturality as an object to study.

The topology of the observer moduli space of positive scalar curvature metrics has been shown to be non-trivial in some cases, see for example \cite{BHSW} and \cite{HSS}. The observer moduli space of positive Ricci curvature metrics on spheres has similarly been studied in \cite{BWW}, and in the case of odd-dimensional spheres has been shown to have non-trivial higher homotopy groups. In Theorem \ref{obs}, we propose a result which, to the best of our knowledge, provides the first examples of non-triviality in the higher homotopy groups of the observer moduli spaces for curvature conditions stronger than positive Ricci curvature.
\smallskip

Let us now give the precise details of the results established in this paper. We begin with our gluing result. To state this, we will need to consider the second fundamental form at a boundary of a Riemannian manifold. This is defined by $\II(u,v)=-\langle\nabla_N u,v\rangle$, where $N$ is the inward normal (note that there also exist different conventions in terms of signs), so that the boundary of any geodesic ball in the round sphere that is contained in a hemisphere has positive definite second fundamental form.

\begin{theoremAlph}\label{main}
Let $(M_1, h_1)$ and $(M_2, h_2)$ be a pair of $n$-dimensional Riemannian manifolds and let $\phi:(\partial M_1, h_1|_{\partial M_1})\rightarrow (\partial M_2, h_2|_{\partial M_2} )$ be an isometry of the boundaries, which we will assume to be compact.
\begin{enumerate}
	\item Suppose that $(M_1,h_1)$ and $(M_2,h_2)$ have $Ric_k>0$ for some $1\le k\le n-1$ and that the sum of second fundamental forms $\II_{\partial M_1}+\II_{\partial M_2}$ is positive semi-definite. Then the
	$C^{0}$-metric $h=h_1\cup_{\phi} h_2$ on the smooth manifold $M_{1}\cup_{\phi}M_{2}$ can be replaced by a $C^{\infty}$-metric with $Ric_k>0$.
	\item Suppose that $(M_1,h_1)$ and $(M_2,h_2)$ have $Sc_k>0$ for some $1\le k\le n$ and that the sum of second fundamental forms $\II_{\partial M_1}+\II_{\partial M_2}$ is $k$-non-negative if $k<n-1$ and $(k-1)$-non-negative if $k= n-1,n$. Then the
	$C^{0}$-metric $h=h_1\cup_{\phi} h_2$ on the smooth manifold $M_{1}\cup_{\phi}M_{2}$ can be replaced by a $C^{\infty}$-metric with $Sc_k>0$.
\end{enumerate}
Moreover, the smoothed metric can be chosen to agree with the original metrics outside an arbitrarily small neighbourhood of the gluing area.

\end{theoremAlph}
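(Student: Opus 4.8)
The plan is to reduce everything to a collar of the gluing hypersurface, identify precisely which curvature quantities can degenerate under a smoothing, and exploit the fact that exactly those quantities are governed --- with the right sign --- by $\II_{\partial M_1}+\II_{\partial M_2}$.

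\textbf{Step 1: localisation and the corner.} I would choose collars $\partial M_i\times[0,\epsilon)\hookrightarrow M_i$ and pass to Fermi coordinates, so that $h_i=dt^2+g^i_t$ for smooth families $g^i_t$ of metrics on $\partial M_i$. Identifying $N:=\partial M_1\cong\partial M_2$ via $\phi$, using that $g^1_0=g^2_0=:g_0$ because $\phi$ is an isometry, and orienting the collars so that $M_1$ corresponds to $t\le0$ and $M_2$ to $t\ge0$, the glued metric takes the form $g=dt^2+g_t$ on $N\times(-\epsilon,\epsilon)$, smooth for $t\le0$ and for $t\ge0$ and only $C^0$ at $t=0$. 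With the stated sign convention for $\II$ one computes $\partial_tg_t|_{0^-}-\partial_tg_t|_{0^+}=2(\II_{\partial M_1}+\II_{\partial M_2})$: the hypothesis says precisely that the jump in the first normal derivative of $g_t$ --- equivalently the jump $S_{0^-}-S_{0^+}$ of the shape operator $S_t=\tfrac12 g_t^{-1}\partial_tg_t$ of the slices --- is nonnegative in the relevant sense. Since all subsequent modifications take place in $\{|t|<\mu\}$ with $\mu$ arbitrarily small, the final assertion about agreeing with the original metrics outside a small neighbourhood is then automatic.

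\textbf{Step 2: where curvature can blow up.} I would write out the curvature of $dt^2+g_t$ via the Gauss and Codazzi equations for the slices together with the Riccati equation $\partial_tS_t+S_t^2=-R_t$, where $R_t=R(\cdot,\partial_t)\partial_t$. The key structural point is that $\partial_tS_t$ (equivalently $\partial_t^2g_t$) occurs \emph{only} in the ``radial'' curvatures: for a $2$-plane spanned by orthonormal $v=v_0\partial_t+\bar v$ and $e=e_0\partial_t+\bar e$ (bars denoting $TN$-components) one has $K(v,e)=-\langle(\partial_tS_t+S_t^2)w,w\rangle+(\text{terms not involving }\partial_tS_t)$ with $w:=v_0\bar e-e_0\bar v\in TN$, and the map $e\mapsto v_0\bar e-e_0\bar v$ is injective on $v^\perp$. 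Hence for $Ric_k>0$ the spike part of $\sum_{i=1}^kK(v,e^i)$ is $-\langle\partial_tS_t,\sum_{i=1}^kw^i\otimes w^i\rangle$, a pairing of $-\partial_tS_t$ against a positive semidefinite form of rank exactly $k$; and for $Sc_k>0$, decomposing each $Ric(e^i,e^i)$ similarly (and for $k\in\{n-1,n\}$ using $\sum_{i=1}^kRic(e^i,e^i)=scal-Ric(e^{k+1},e^{k+1})$), the spike part of $\sum_{i=1}^kRic(e^i,e^i)$ is $-\langle\partial_tS_t,F\rangle$ for a positive semidefinite $F$ which --- because orthonormality of $e^1,\dots,e^k$ forces the Gram matrix of their $TN$-parts to be $I_k-\mathbf e_0\mathbf e_0^{\mathsf T}$, so $\beta:=\sum(e^i_0)^2\le1$ --- has the shape $\beta\,\mathrm{Id}+\Pi_{k-1}+(1-\beta)\pi_1$. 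A short linear-algebra computation (using that $k$-nonnegativity implies $m$-nonnegativity for $m\ge k$) then shows $\langle-\partial_tS_t,F\rangle\ge0$ over all admissible configurations as soon as $-\partial_tS_t$ is $k$-nonnegative when $k<n-1$, resp.\ $(k-1)$-nonnegative when $k\in\{n-1,n\}$; while $\langle-\partial_tS_t,\sum w^i\otimes w^i\rangle\ge0$ for all configurations as soon as $-\partial_tS_t$ is positive semidefinite --- which is why part (1) asks for $\II_{\partial M_1}+\II_{\partial M_2}$ positive semidefinite independently of $k$, a clean sufficient condition recovering Perelman's theorem at $k=n-1$.

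\textbf{Step 3: the smoothing, and the main obstacle.} I would replace $g_t$ on $\{|t|<\mu\}$ by a smooth family $\bar g_t$ which (a) agrees with $g_t$ to infinite order at $t=\pm\mu$, (b) satisfies $\bar g_t\to g_0$ uniformly, and (c) has $-\partial_t^2\bar g_t$ of the nonnegativity type prescribed in Step 2 throughout $\{|t|<\mu\}$; this last property is attainable precisely because $\int_{-\mu}^{\mu}(-\partial_t^2\bar g_t)\,dt$ is forced to be a positive multiple of $\II_{\partial M_1}+\II_{\partial M_2}$, so one can interpolate $\partial_t\bar g_t$ from the left slope to the right slope along a monotone cutoff (making $-\partial_t^2\bar g_t$ a nonnegative scalar times the jump), absorbing the matching at $t=\pm\mu$ into lower-order corrections. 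The relevant curvature sum then becomes $(\text{spike part}\ge0)+(\text{terms bounded uniformly as }\mu\to0)$: near $|t|=\mu$ the bounded part is within $\tfrac12\epsilon_0$ of its value for $h_1$ or $h_2$, where $\epsilon_0>0$ is a uniform lower bound on the compact collars coming from the strict inequalities $Ric_k>0$ (resp.\ $Sc_k>0$); in the interior of the neck, along the nondegenerate directions of $\II_{\partial M_1}+\II_{\partial M_2}$ the spike is of order $\mu^{-1}$ and dominates, while along its degenerate directions the metrics $g^1_t,g^2_t$ already agree to first order at $t=0$, so no drift of the bounded part occurs there. Choosing $\mu$ small completes the proof. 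I expect the genuine difficulty to lie entirely in this step: producing a family $\bar g_t$ that is simultaneously $C^\infty$, matches the two metrics to infinite order at the ends of the neck, keeps $-\partial_t^2\bar g_t$ of the exact prescribed nonnegativity type at \emph{every} point, and along which the lower-order curvature contributions are quantitatively controlled against the $\epsilon_0$-threshold --- in particular, handling the degenerate directions of $\II_{\partial M_1}+\II_{\partial M_2}$ (where no helpful spike exists and one must instead stay $C^1$-close to the patched metric) and carrying out the bookkeeping behind the degree shift $k\rightsquigarrow k-1$ at $k=n-1,n$ in part (2), where the scalar-curvature identity changes which forms $F$ the quantity $-\partial_tS_t$ must be tested against.
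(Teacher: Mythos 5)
Your Step 1 and your identification of the distributional spike $-\partial_t^2 g_t\sim 2(\II_{\partial M_1}+\II_{\partial M_2})\,\delta_0$ match the paper's set-up, and for part (ii) your picture is essentially right: every $Ric(e^i,e^i)$ sees the radial direction, so each term in $\sum_i Ric(e^i,e^i)$ carries a spike of order $\mu^{-1}$, and the problem reduces to the eigenvalue bookkeeping you sketch (this is the paper's Lemma \ref{Ricci_estimates} and Proposition \ref{C^1_Sc_k}). The genuine gap is in part (i) for $k<n-1$. A $(k+1)$-frame $\{v,e^1,\dots,e^k\}$ can be \emph{entirely tangential} to the slices, in which case $v_0=e^i_0=0$, all your vectors $w^i$ vanish, and the ``spike part'' of $\sum_i K(v,e^i)$ is identically zero: there is nothing of order $\mu^{-1}$ to dominate, in any direction. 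For such frames the entire curvature sum sits in what you call the bounded part, and that part is \emph{not} close to its values for $h_1$ or $h_2$ in the interior of the neck. By the Gauss equation, $K(v,e^i)=K_t(v,e^i)-\det(\II_t|_P)/\det(g_t|_P)$ with $\II_t=-\tfrac12\partial_tg_t$, and in the middle of the neck $\partial_tg_t$ is a genuine interpolation $\lambda h_1'(0)+(1-\lambda)h_2'(0)$. The determinant of this restricted to a $2$-plane is a quadratic polynomial in $\lambda$ that can exceed both endpoint values: for instance $h_1'(0)=\mathrm{diag}(2a,0)$, $h_2'(0)=\mathrm{diag}(0,2a)$ gives $4a^2\lambda(1-\lambda)$ on the coordinate plane, maximal at $\lambda=\tfrac12$ and zero at both ends, so the sectional curvature there dips below both $K_{h_1}$ and $K_{h_2}$ by an amount independent of $\mu$.

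What closes this case --- and it is the technical heart of the paper's proof, namely Proposition \ref{K_lower_bound} --- is that the second $\lambda$-derivative of this quadratic equals $\det\bigl((h_1'(0)-h_2'(0))|_P\bigr)$, which is nonnegative exactly when $\II_{\partial M_1}+\II_{\partial M_2}$ is positive semi-definite; convexity then bounds the Gauss correction above by the linear interpolation of its endpoint values, hence bounds $K(v_t,e^i_t)$ below by the interpolation of $K_{h_1}$ and $K_{h_2}$ up to $O(\epsilon)$. Your proposal invokes the hypothesis on $\II_{\partial M_1}+\II_{\partial M_2}$ only for the sign of the second-derivative spike and never for these first-order (shape-operator-squared) terms, so it cannot handle tangential or nearly tangential frames; note also that this explains why part (i) needs positive semi-definiteness for every $k$, rather than a $k$-trace condition. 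Two smaller issues: the paper first perturbs $h_1$ near the boundary to make $\II_{\partial M_1}+\II_{\partial M_2}$ strictly positive, which is needed for the convexity and compactness arguments; and demanding $-\partial_t^2\bar g_t\geq 0$ throughout the neck is incompatible with infinite-order matching at $t=\pm\mu$ (the original metrics impose their own signs there) --- one should instead only require the second derivative of the smoothing to stay between the one-sided values up to $\mu$, which is what the paper's mollification lemma (Lemma \ref{smoothing}) delivers.
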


In the case $k=n-1$ in (i) and in the case $k=1$ in (ii) we recover the Perelman gluing theorem, which was also established in \cite[Theorem 7.1]{S}. In the case $k=1$ in (i), i.e.\ gluing two manifolds of positive sectional curvature, the proof of the main theorem in \cite{Ko}, (which asserts that the glued manifold is an Alexandrov space), in fact constructs a $C^1$-metric of positive sectional curvature. Theorem \ref{main} thus provides a smooth version of this result. The special case of doublings also follows from work of Guijarro \cite[Theorem 1.2]{Gu} combined with \cite[Theorem 1.2]{Kr}, in which case the sectional curvatures only need to be non-negative. This case was previously studied for Alexandrov spaces by Perelman \cite{Pe1} and generalized by Petrunin \cite{Pt}. In (i) the case $k=2$ was noted without proof in \cite[Theorem 7.5]{S}. In (ii) the case $k=n$ was shown in \cite[Theorem 42]{BH} and \cite[Theorem 7.2]{S} and preliminary versions of this result were obtained by Gromov--Lawson \cite[Theorem 5.7]{GL1} and Miao \cite[Corollary 3.1]{Mi}. Note that in this case the condition on the second fundamental forms becomes a condition on the mean curvatures. To the best of our knowledge, for all other values of $k$, the result is new.

It was recently shown by Ketterer \cite{Ke} that the assumptions in (i) of Theorem \ref{main} are necessary in the following sense: suppose as in (i) that $h_1$ and $h_2$ have $Ric_k>0$. Then the $C^0$ metric $h=h_1\cup_\phi h_2$ satisfies the synthetic curvature condition $CD(0,n)$ if and only if the sum of second fundamental forms is positive semi-definite, see \cite[Theorem 1.4 and Corollary 1.9]{Ke}.

The proof of Theorem \ref{main} will occupy the next section. Examination of this proof reveals that there is nothing special about the positivity condition here, in the sense that exactly the same arguments with a curvature lower bound $Ric_k>\kappa$ for any $\kappa\in\R$ will yield an analogous result. Thus we have:

\begin{corollaryAlph}\label{negative_lower_bound}
The above gluing theorem continues to hold if $Ric_k>0$ is replaced by $Ric_k>\kappa$ and $Sc_k>0$ is replaced by $Sc_k>\kappa$ for any $\kappa\in\R$.
\end{corollaryAlph}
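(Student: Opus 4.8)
The plan is to observe that the proof of Theorem~\ref{main} is entirely local in nature and only ever uses comparison inequalities of the form ``such-and-such sum of sectional curvatures (or Ricci eigenvalues) is bounded below by a quantity that we can control,'' rather than the sign of that lower bound itself. Concretely, the gluing construction in the next section produces, in a small collar neighbourhood of the gluing hypersurface, a smooth metric built by interpolating the two given metrics via an explicit family of ``mollifying'' profiles; the curvature of the interpolated metric is estimated from below in terms of (a) the curvatures of $h_1$ and $h_2$ on the collars, (b) the second fundamental forms at the boundary, and (c) derivatives of the interpolation profile which can be made as small as one wishes away from the hypersurface and whose sign near the hypersurface is governed precisely by the (semi-)definiteness hypothesis on $\II_{\partial M_1}+\II_{\partial M_2}$. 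None of these estimates ``see'' the constant $0$: replacing $Ric_k>0$ by $Ric_k>\kappa$ merely shifts the relevant curvature quantities by a fixed bounded amount.

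First I would make precise the statement that the proof of Theorem~\ref{main} establishes a quantitative inequality. At the (finitely many) places where positivity is invoked, the argument in fact shows that for the smoothed metric one has $Ric_k \ge \min(Ric_k(h_1), Ric_k(h_2)) - \varepsilon$ pointwise on the gluing region (and equality with the original metric outside it), where $\varepsilon>0$ can be chosen arbitrarily small by shrinking the collar and the interpolation parameter. Here the hypothesis on the second fundamental forms is used only to guarantee that a certain boundary term is non-negative, which is unaffected by the curvature normalization. Since $M_1$ and $M_2$ are assumed to have $Ric_k>\kappa$ with compact boundary, on a suitable relatively compact collar neighbourhood we have $Ric_k(h_i) \ge \kappa + \delta$ for some $\delta>0$; choosing $\varepsilon < \delta$ then yields $Ric_k > \kappa$ for the smoothed metric. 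The same remark applies verbatim to the $Sc_k$ statement, working with the $k$-positivity of the Ricci tensor, which is likewise a closed condition robust under bounded additive perturbations.

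Thus the only real ``step'' here is a careful re-reading of the proof of Theorem~\ref{main} to confirm that every use of positivity is of this quantitative, normalization-free kind, and to record the resulting inequality; there is no new geometric input. I expect the main (and only) obstacle to be a bookkeeping one: one must check that the finitely many curvature estimates in the proof are genuinely stated with explicit lower bounds rather than being phrased, for convenience, purely in terms of signs, and in the latter case one must reinsert the bounds. Since the interpolation and smoothing are standard and the constants involved depend only on $C^2$-bounds of $h_1,h_2$ near the boundary and on the (fixed) profile functions, this causes no difficulty. I would therefore present the corollary as an immediate consequence, with the proof consisting of the single remark that every inequality in the proof of Theorem~\ref{main} holds with the lower bound $0$ replaced by $\kappa$ throughout.
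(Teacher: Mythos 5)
Your proposal is correct and matches the paper's treatment: the paper proves Corollary~\ref{negative_lower_bound} by exactly this observation, namely that every positivity assertion in the proof of Theorem~\ref{main} is really a quantitative lower bound (a convex combination of the boundary curvatures up to $O(\epsilon)$, or a term blowing up like $1/\epsilon$), so the threshold $0$ can be replaced by any $\kappa$ using compactness of the boundary to get a uniform margin $\delta>0$. No gap.
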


While non-strict curvature inequalities will in general not be preserved by the construction in the proof of Theorem \ref{main}, they are \emph{almost} preserved.

\begin{corollaryAlph}\label{almost-nonnegative}
	In the setting of Theorem \ref{main} suppose that $(M_1,h_1)$ and $(M_2,h_2)$ merely have $Ric_k\geq 0$ in (i) and $Sc_k\geq 0$ in (ii). Then the manifold $M_1\cup_\phi M_2$ admits Riemannian metrics of almost non-negative $k$-th intermediate Ricci curvature in (i) and of almost $k$-non-negative Ricci curvature in (ii).
\end{corollaryAlph}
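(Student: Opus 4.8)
The proof of Corollary \ref{almost-nonnegative} will follow from a scaling argument combined with Corollary \ref{negative_lower_bound}. The plan is as follows. Suppose $(M_1,h_1)$ and $(M_2,h_2)$ have $Ric_k\geq 0$ (the case $Sc_k\geq 0$ is entirely analogous). Fix a small $\varepsilon>0$ and choose a slightly perturbed pair of metrics. The key point is that one cannot directly apply Theorem \ref{main}, since the curvature inequalities are non-strict; instead, one slightly decreases the curvature to a negative lower bound of the form $Ric_k>-\delta$ for $\delta$ as small as we like, at the cost of perturbing the metrics so that the boundary isometry and the second fundamental form condition survive. More precisely, I would first observe that near each boundary $\partial M_i$ one can perturb $h_i$ in a collar so that $\II_{\partial M_1}+\II_{\partial M_2}$ becomes strictly positive definite (in (i)) or strictly $k$-positive, respectively $(k-1)$-positive, in (ii), while changing the curvature by an amount controlled by the size of the perturbation; this is exactly the kind of collar adjustment already present in the proof of Theorem \ref{main}, and it only affects curvature on a small collar. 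Thus, after an arbitrarily small perturbation, we are in a position where $Ric_k>-\delta$ and the second fundamental form hypothesis of Theorem \ref{main} (with the strict sign) holds.

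Next, applying Corollary \ref{negative_lower_bound} with $\kappa=-\delta$, we obtain a smooth metric $g_\delta$ on $M_1\cup_\phi M_2$ with $Ric_k>-\delta$ (in case (i)) or $Sc_k>-\delta$ (in case (ii)). To turn this into an \emph{almost} non-negative statement we now rescale: recall that a closed manifold $N$ has almost non-negative $k$-th intermediate Ricci curvature if for every $\varepsilon>0$ there is a metric $g$ with $Ric_k(g)\geq -\varepsilon$ and $\mathrm{diam}(N,g)\leq 1$ (equivalently $Ric_k(g)\cdot\mathrm{diam}(N,g)^2\geq -\varepsilon$), with the analogous definition using $Sc_k$. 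Since $Ric_k$ scales like $K$, i.e.\ $Ric_k(\lambda^2 g)=\lambda^{-2}Ric_k(g)$, while $\mathrm{diam}(N,\lambda^2 g)=\lambda\,\mathrm{diam}(N,g)$, we may first fix $\delta=1$, obtain $g_1$ with $Ric_k(g_1)>-1$ and some finite diameter $D$ (the glued manifold is compact, since both boundaries are compact and, by the moreover-clause of Theorem \ref{main}, the metric agrees with $h_1,h_2$ away from a small neighbourhood of the gluing region), and then scale by $\lambda$ large: the metric $\lambda^2 g_1$ has $Ric_k>-\lambda^{-2}$ and normalising to unit diameter gives a metric with $Ric_k\geq -\varepsilon$ once $\lambda$ is large enough. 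Letting $\varepsilon\to 0$ yields almost non-negative $k$-th intermediate Ricci curvature; the $Sc_k$ case is identical.

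The main obstacle, and the only point requiring genuine care, is the first step: ensuring that an arbitrarily small perturbation of $h_1$ and $h_2$ simultaneously (a) keeps the boundary metrics isometric via a (possibly modified) diffeomorphism $\phi$, (b) promotes the merely semi-definite / $k$-non-negative sum of second fundamental forms to the strict condition required by Theorem \ref{main}, and (c) only decreases $Ric_k$ (resp.\ $Sc_k$) by an amount tending to $0$. Point (b) is the delicate one, because strengthening the second fundamental form typically means bending the collar, which can destroy the curvature bound unless done carefully. However, this is precisely the mechanism already analysed in the proof of Theorem \ref{main} — there one deforms the metrics in a collar of the boundary, under the given sign hypothesis on $\II_{\partial M_1}+\II_{\partial M_2}$, so as to arrange the gluing; one can simply run that same collar deformation with a small additional ``push'' that makes the relevant form strictly positive (resp.\ strictly $k$- or $(k-1)$-positive), and track that the resulting curvature drop is $O(\delta)$. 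Alternatively, one can sidestep (b) altogether by noting that if $\II_{\partial M_1}+\II_{\partial M_2}$ is only positive semi-definite then rescaling $M_1$ near its boundary by a factor slightly less than $1$ in the normal-collar direction increases $\II_{\partial M_1}$ by a small positive-definite amount, at the cost of a small curvature perturbation; either route reduces the Corollary to Corollary \ref{negative_lower_bound} plus the elementary scaling computation above.
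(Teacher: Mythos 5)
Your overall strategy --- reduce to Corollary \ref{negative_lower_bound} with a small negative lower bound $\kappa$ --- is the same as the paper's, but your final scaling step is wrong. The quantity $\sum_i K_g(v,e^i)\,\textup{diam}(g)^2$ in Definition \ref{D:almost_non-negative} is scale-invariant: if $g_1$ satisfies $Ric_k(g_1)>-1$ with diameter $D$, then $\lambda^2 g_1$ satisfies $Ric_k>-\lambda^{-2}$ but has diameter $\lambda D$, so the product is still only bounded below by $-D^2$, and ``normalising to unit diameter'' simply undoes the scaling. No rescaling of a single metric can improve a scale-invariant quantity, so you cannot fix $\delta=1$ once and for all. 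The correct argument (and the paper's) produces a genuine family of metrics: for each $\delta>0$ apply Corollary \ref{negative_lower_bound} with $\kappa=-\delta/(2d^2)$, where $d=\textup{diam}(h_1\cup_\phi h_2)$ --- legitimate since $Ric_k\geq 0>\kappa$ --- and then control the diameter of the resulting glued metric uniformly as the gluing parameter $\epsilon\to 0$. The paper does this by observing that $g_t=h_i(t)+O(\epsilon)$, so the interpolated (and subsequently mollified) metric converges to $h_1\cup_\phi h_2$ in a $C^0$ sense and hence its diameter converges to $d$; this diameter control is the one genuinely new ingredient of the proof and it is missing from your write-up (finiteness of the diameter for a single $\delta$ is not enough).

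By contrast, the ``main obstacle'' you identify is not an obstacle. The hypothesis of the Corollary is ``in the setting of Theorem \ref{main}'', so the second fundamental form condition is unchanged, and Theorem \ref{main}(i) only asks for $\II_{\partial M_1}+\II_{\partial M_2}$ to be positive \emph{semi}-definite (resp.\ $k$-non-negative in (ii)); the promotion to the strict condition is carried out inside the proof of Theorem \ref{main} via Burdick's collar deformation and need not concern you here. Likewise no perturbation of the interior curvature is needed, since $Ric_k\geq 0$ already implies $Ric_k>\kappa$ for every $\kappa<0$ on the nose. Your entire first paragraph can therefore be deleted; the real work lies in the diameter control described above.
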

For the precise definitions of almost non-negative $k$-th intermediate Ricci curvature and almost $k$-non-negative Ricci curvature see Definition \ref{D:almost_non-negative} below. This definition coincides with the classical notions of almost non-negative sectional curvature for $k=1$ in (i), almost non-negative Ricci curvature for $k=n-1$ in (i) and $k=1$ in (ii), and almost non-negative scalar curvature for $k=n$ in (ii). Note that this result in particular allows the gluing of two manifolds of non-negative sectional curvature together to produce a metric of almost non-negative sectional curvature.
\smallskip

Turning our attention to moduli spaces, let us recall the notions of {\it observer diffeomorphism group} and {\it observer moduli space} of Riemannian metrics. Given a closed connected manifold $M$ and a choice of basepoint $x_0 \in M,$ the observer diffeomorphism group based at $x_0$, $\text{Diff}_{x_0}(M)$, is the subgroup of the diffeomorphism group $\text{Diff}(M)$ consisting of maps $f \in \text{Diff}(M)$ such that $f(x_0)=x_0$ and $df_{x_0}=\text{id}_{T_{x_0}M}.$ If $\mathcal{R}(M)$ denotes the space of all Riemannian metrics on $M$, then there is a corresponding moduli space $\mathcal{R}(M)/\text{Diff}_{x_0}(M).$ This is the observer moduli space of Riemannian metrics on $M$. Note that $\text{Diff}_{x_0}(M)$ acts on $\mathcal{R}(M)$ by pull-back. The observer moduli space offers certain technical advantages over the full moduli space $\mathcal{R}(M)/\text{Diff}(M)$, which stem from the fact that $\text{Diff}_{x_0}(M)$ acts freely on $\mathcal{R}(M),$ whereas $\text{Diff}(M)$ does not. If we restrict attention to the subspace of Riemannian metrics satisfying some sort of curvature condition invariant under the diffeomorphism group, then we have a corresponding observer moduli space of such metrics. For more background see \cite[chapter 7]{TW}.

Here is our main result about the observer moduli space. To state this, let $\mathcal{M}^{Ric_k>0}_{x_0}(M)$ denote the observer moduli space of metrics on $M$ with $Ric_k>0.$

\begin{theoremAlph}\label{obs}
Given $\ell \in \N$, let $n \ge 8\ell+5$ be odd. Then the observer moduli space of $Ric_k>0$ metrics on $S^n$ satisfies $$\pi_{4\ell}\bigl(\mathcal{M}^{Ric_k>0}_{x_0}(S^n),[ds^2_n]\bigr)\otimes \Q \neq 0$$ for all $(n+3)/2 \le k \le n-1.$
\end{theoremAlph}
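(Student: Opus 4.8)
The plan is to follow the strategy of Botvinnik--Walsh--Wraith (\cite{BWW}) for the positive Ricci curvature case, and upgrade their construction so that the metrics involved have $Ric_k>0$ for the stated range of $k$, using Theorem \ref{main} as the essential gluing tool. The starting point is the existence of suitable ``fiber bundle'' families of metrics: one takes an exotic sphere bundle, or more precisely a family of metrics parametrized by a sphere $S^{4\ell}$, coming from the Gromov--Lawson--Hitchin style construction that detects non-triviality of $\pi_{4\ell}$ of the relevant moduli space rationally via the surgery-theoretic or index-theoretic invariants (in the Ricci setting these come from the work on the observer moduli space, where the detecting invariant is ultimately a characteristic number of an associated bundle over $S^{4\ell}\times M$). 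The dimension hypothesis $n\ge 8\ell+5$ is exactly what is needed for these bundles to exist and for the relevant homotopy groups of diffeomorphism/gluing data to be in the stable range.

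First I would recall the construction of a smooth bundle $E\to S^{4\ell}$ with fiber $S^n$ whose associated family of fiberwise metrics is used in \cite{BWW}: this bundle is built by clutching two trivial pieces $D^{4\ell}_\pm\times S^n$ along $S^{4\ell-1}\times S^n$ using a loop of diffeomorphisms representing a nontrivial element of $\pi_{4\ell-1}(\Diff_{x_0}(S^n))\otimes\Q$, detected by Farrell--Hsiang / Bökstedt-type rational computations. The fiberwise metric is the round metric $ds^2_n$ near the basepoint section (which is preserved because we use observer diffeomorphisms), and one needs to check that this family can be taken to consist of $Ric_k>0$ metrics. The key geometric input is that the round metric itself has $Ric_k>0$ for every $k$, and, crucially, that the interpolation/gluing needed to glue the two halves of the bundle across $S^{4\ell-1}\times S^n$ can be carried out within $Ric_k>0$: here one splits $S^n$ into two disks, arranges that near the equatorial $S^{n-1}$ both halves look like a product with a fixed metric and have second fundamental forms summing to zero (totally geodesic gluing locus), and invokes Theorem \ref{main}(i) with $\II_{\partial M_1}+\II_{\partial M_2}$ positive semi-definite (in fact zero) — this is legitimate precisely for $1\le k\le n-1$, in particular for $(n+3)/2\le k\le n-1$. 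The condition $k\le n-1$ is what restricts us to part (i) rather than part (ii), and the lower bound on $k$ enters through the requirement that the ``core'' metrics on the disk pieces (which are perturbations of the round hemisphere, or the Gromov--Lawson tube metrics) actually have $Ric_k>0$; these tube/neck metrics genuinely require $k$ not too small, and $(n+3)/2$ is the threshold coming from the intermediate Ricci curvature computations for the relevant doubly-warped product necks (cf.\ the analysis in \cite{RW1,RW2}).

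Second, with the family $\{g_b\}_{b\in S^{4\ell}}$ of $Ric_k>0$ metrics on $S^n$ in hand, agreeing with $ds^2_n$ to infinite order along the basepoint section, I would produce the map $S^{4\ell}\to\mathcal{M}^{Ric_k>0}_{x_0}(S^n)$, $b\mapsto[g_b]$, based at $[ds^2_n]$, and show it represents a nonzero class in $\pi_{4\ell}\otimes\Q$. For this one passes to the universal bundle picture: the family gives a bundle of $Ric_k>0$ metrics over $S^{4\ell}$ whose total space carries a metric that is fiberwise $Ric_k>0$; since $Ric_k>0$ implies $Ric>0$ when $k\le n-1$... no — rather, the detecting invariant is the same one used in the Ricci case, namely a certain rational characteristic class (a Pontryagin-number-type expression, or the Farrell--Hsiang invariant of the associated topological bundle) which only depends on the underlying smooth bundle $E\to S^{4\ell}$ and not on the metric. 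Since $Ric_k>0\Rightarrow Ric_{n-1}>0=Ric>0$ is false in general but $Ric_k>0\Rightarrow Ric>0$ does hold (summing $k$ sectional curvatures and using that each pair is...), actually the clean statement is: the space of $Ric_k>0$ metrics maps into the space of $Ric>0$ metrics (as $Ric_k>0$ for $k\le n-1$ implies $Ric>0$), so the observer moduli spaces map $\mathcal{M}^{Ric_k>0}_{x_0}(S^n)\to\mathcal{M}^{Ric>0}_{x_0}(S^n)$, and our class maps to the \cite{BWW} class, which is rationally nontrivial; hence our class is too. That reduces everything to (a) constructing the $Ric_k>0$ family — the substantive new work — and (b) a formal naturality argument.

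The main obstacle I expect is step one's geometric heart: carrying out the \cite{BWW} bundle construction while staying inside $Ric_k>0$. In the positive Ricci setting one uses that the round metric is isotopic-through-$Ric>0$-metrics to a metric that is a round cylinder near the equator, and one glues hemispheres with totally geodesic boundary; replicating this for $Ric_k>0$ requires knowing that (i) the round hemisphere of $S^n$ can be deformed, rel boundary behavior, to a metric with totally geodesic equatorial boundary while keeping $Ric_k>0$ throughout, and (ii) the clutching diffeomorphisms act by isometries near the basepoint only, so the glued fiberwise metric is genuinely $Ric_k>0$ everywhere and smooth after applying Theorem \ref{main}. Point (i) is exactly where the bound $k\ge(n+3)/2$ is forced: the natural ``torpedo''-type cap metric on a hemisphere with totally geodesic boundary is a doubly warped product $dr^2+f(r)^2 ds^2_{p}+g(r)^2 ds^2_{q}$ (here with $p+q=n-1$), and an elementary but careful computation of its $k$-th intermediate Ricci curvatures shows positivity holds iff $k$ exceeds roughly $n/2$; verifying this inequality and checking it is compatible with the gluing data is the crux. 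Once that cap metric exists with the required properties, Theorem \ref{main}(i) (applied with the zero sum of second fundamental forms, hence positive semi-definite) finishes the local-to-global step, and the rest is the homotopy-theoretic bookkeeping of \cite{BWW} together with the naturality map to the positive Ricci moduli space.
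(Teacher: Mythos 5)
Your overall strategy coincides with the paper's: follow the Hatcher-bundle construction of \cite{BWW}, equip the disc-bundle fibres with $Ric_k>0$ metrics, glue fibrewise using a family version of Theorem \ref{main}, and detect nontriviality exactly as in the positive Ricci case (your naturality argument, pushing the class forward along $\mathcal{M}^{Ric_k>0}_{x_0}(S^n)\to\mathcal{M}^{Ric>0}_{x_0}(S^n)$, is a clean way of phrasing what the paper does implicitly). The genuine gap is that the quantitative heart of the theorem is left unproved. You never derive the threshold $k\ge(n+3)/2$: you posit a doubly warped ``torpedo'' cap with totally geodesic boundary and assert that ``an elementary but careful computation'' gives positivity for $k$ roughly above $n/2$, explicitly flagging this as the crux without carrying it out. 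The paper instead takes the disc fibre to be $D^n=D^{p+1}\times D^q$ with a product of rotationally symmetric positive sectional curvature metrics and a convex ellipsoid boundary (exactly the \cite{BWW} fibre), and reads off $Ric_k>0$ for $k\ge\max\{p+2,q+1\}$ from \cite[Theorem A]{RW2}; choosing $p=(n-3)/2$ and $q=(n+1)/2$ then yields precisely $k\ge(n+3)/2$. Note also that this fibre metric has convex, not totally geodesic, boundary, which is the hypothesis under which the fibrewise gluing statement (Theorem \ref{family}) is formulated.

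Likewise, the bound $n\ge 8\ell+5$ is asserted to be ``exactly what is needed'' for stability but is not derived: the paper traces it to the requirements $p\ge 4\ell+1$ (so that $\pi_{4\ell-1}\mathrm{O}(p)$ and $\pi_{4\ell-1+p}S^p$ are independent of $p$) and $q\ge p+1$ from \cite{BWW}, giving $n=p+q+1\ge 8\ell+4$, with the parity requirement forcing $n\ge 8\ell+5$. A further, more minor, omission is that gluing the two disc bundles requires uniform choices of the smoothing parameters over the compact base; this is the content of Theorem \ref{family} and does not appear in your outline. In short: the roadmap is correct and matches the paper's, but the two numerical hypotheses in the statement --- which are the substance of the theorem --- are identified rather than established.
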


\smallskip

Finally, by considering doublings of manifolds, we derive the following consequences of Theorem~\ref{main}.
\begin{corollaryAlph}\label{tot_geodesic}
	Let $(M^n,g)$ be a Riemannian manifold of $Ric_k>0$ (resp.\ $Sc_k>0$) with compact boundary $\partial M$ such that the second fundamental form $\II_{\partial M}$ is positive semi-definite (resp.\ $k$-non-negative if $k<n-1$ and $(k-1)$-non-negative if $k=n-1,n$). Then $M$ admits a Riemannian metric of $Ric_k>0$ (resp.\ $Sc_k>0$) such that the boundary is totally geodesic, i.e.\ $\II_{\partial M}\equiv 0$.
\end{corollaryAlph}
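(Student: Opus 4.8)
The plan is to apply Theorem~\ref{main} to the double of $M$ along its boundary. Concretely, take two copies $(M_1,h_1)$ and $(M_2,h_2)$ of $(M^n,g)$, and let $\phi\colon\partial M_1\to\partial M_2$ be the identity map, which is trivially an isometry of the induced boundary metrics. Since both copies carry the same second fundamental form $\II_{\partial M}$, the sum $\II_{\partial M_1}+\II_{\partial M_2}=2\,\II_{\partial M}$ is positive semi-definite in the $Ric_k>0$ case, and $k$-non-negative (resp.\ $(k-1)$-non-negative) in the $Sc_k>0$ case exactly under the stated hypotheses, so the relevant part of Theorem~\ref{main} applies. We conclude that the double $D(M)=M_1\cup_\phi M_2$ admits a smooth metric $\bar g$ with $Ric_k>0$ (resp.\ $Sc_k>0$), and by the final clause of Theorem~\ref{main} we may take $\bar g$ to agree with the original metric outside an arbitrarily small neighbourhood $U$ of the gluing hypersurface.

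The second step is to observe that the obvious involution of the double that swaps the two copies can be arranged to be an isometry of $\bar g$. For this one should check that the smoothing construction in the proof of Theorem~\ref{main} is symmetric with respect to the two factors; since the two input metrics and the gluing data are identical, and the construction interpolates in a neighbourhood of the gluing hypersurface using data that depends only on the (common) boundary geometry, the resulting $\bar g$ is invariant under the swap involution $\sigma$. Then $\mathrm{Fix}(\sigma)$ is a totally geodesic hypersurface of $(D(M),\bar g)$, being the fixed-point set of an isometry, and it is precisely a copy of $\partial M$ (pushed slightly, but still a smooth separating hypersurface). One of the two closed-off halves of $D(M)\setminus\mathrm{Fix}(\sigma)$, together with this totally geodesic hypersurface as its boundary, is diffeomorphic to $M$: away from the small neighbourhood $U$ the half is literally $(M\setminus U',g)$ for a collar $U'$ of $\partial M$, and near $\mathrm{Fix}(\sigma)$ the metric $\bar g$ is a product in the normal direction by the totally geodesic condition, so the half is diffeomorphic to $M$ with its boundary collar reattached. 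This yields the desired metric on $M$ with $Ric_k>0$ (resp.\ $Sc_k>0$) and totally geodesic boundary.

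I expect the main obstacle to be the second step: verifying that the smoothed metric produced by Theorem~\ref{main} is genuinely invariant under the swap involution, and that its fixed-point set is a smooth hypersurface realising $\partial M$. If the construction in Section~2 is not manifestly $\sigma$-equivariant, one can instead average: replace $\bar g$ by $\tfrac12(\bar g+\sigma^*\bar g)$. However, averaging does not in general preserve the curvature conditions $Ric_k>0$ or $Sc_k>0$, so this fallback is not automatic and a direct symmetry check of the construction is preferable. An alternative route that sidesteps equivariance entirely is to cut $D(M)$ along a hypersurface where $\bar g$ has a prescribed product structure; since the smoothing is performed only in an arbitrarily small neighbourhood of the gluing area and leaves a genuine isometric copy of $\partial M$ on each side, one can choose the cut to lie in the product region, but one still needs to know that such a product slice exists, which again comes down to understanding the local form of the construction near the gluing hypersurface. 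Modulo this bookkeeping, the corollary is a direct consequence of Theorem~\ref{main}.
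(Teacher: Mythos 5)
Your proposal is correct and follows essentially the same route as the paper: double $(M,g)$ via Theorem~\ref{main}, observe that the interpolating metric in \eqref{eq:g(t)} (and its subsequent smoothing) is symmetric under the swap involution when the two input metrics coincide, conclude that the fixed-point set --- the glued copy of $\partial M$ --- is totally geodesic, and cut. Your instinct that the symmetry must be checked directly (rather than recovered by averaging) is exactly what the paper does.
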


\begin{corollaryAlph}\label{double}
	Let $n\in\N$, $1\leq d\leq n-4$, and $k\geq n-d$ if $d>1$ and $k=n$ if $d=1$. Then for any $n$-dimensional compact manifold $M^n$ such that the inclusion $\partial M\hookrightarrow M$ is $d$-connected, the double $M\cup_{\partial M}M$ admits a metric of $Sc_k>0$. In particular, if $M$ is a closed, $d$-connected $n$-dimensional manifold, then $M\#(-M)$ admits a metric of $Sc_k>0$.
\end{corollaryAlph}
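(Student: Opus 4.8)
The plan is to first produce a metric of $Sc_k>0$ on $M$ itself with $\II_{\partial M}$ being $k$-non-negative (and $(k-1)$-non-negative when $k=n-1,n$), and then to glue two copies of it along the identity of $\partial M$ via Theorem \ref{main}(ii). Granting such a metric $g$ on $M$, the gluing is immediate: the sum of the two boundary second fundamental forms is $2\,\II_{\partial M}$, which again is $k$-non-negative (and $(k-1)$-non-negative when $k=n-1,n$), so Theorem \ref{main}(ii) produces a metric of $Sc_k>0$ on $M\cup_{\partial M}M$. (Alternatively, one may first apply Corollary \ref{tot_geodesic} to make $\partial M$ totally geodesic, after which the two second fundamental forms sum to zero.) The ``in particular'' assertion reduces to the general one: for closed, $d$-connected $M$ put $M^\circ=M\setminus\mathring{D}^n$; then $M^\circ$ and $\partial M^\circ=S^{n-1}$ are each $d$-connected (as $d\le n-4<n-1$), so the homotopy exact sequence of the pair $(M^\circ,\partial M^\circ)$ forces $\pi_i(M^\circ,\partial M^\circ)=0$ for $i\le d$, i.e.\ $\partial M^\circ\hookrightarrow M^\circ$ is $d$-connected; since $M^\circ\cup_{\partial M^\circ}M^\circ=M\#(-M)$, the general statement finishes the argument.

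It remains to construct the metric on $M$ (for $d\ge2$; the borderline case $d=1$ is addressed at the end), and for this I would use a handle decomposition; we may assume $M$ connected. Since $\partial M\hookrightarrow M$ is $d$-connected, standard handle theory --- this is where $d\le n-4$ enters, to carry out the required handle trading via the Whitney trick --- shows that $M$ is obtained from a single $0$-handle $D^n$ by successively attaching handles of index $\le n-d-1$ (dually, $M$ is built from a collar $\partial M\times[0,1]$ by attaching handles of index $\ge d+1$). I would start with $D^n$ carrying the metric of a small round geodesic ball, which has $Sc_k>0$ and strictly convex boundary. Attaching a handle of index $\lambda\le n-d-1$ to a sublevel set amounts to surgery along an embedded framed sphere $S^{\lambda-1}$ of codimension $n-\lambda\ge d+1$ in the current boundary; since $k\ge n-d$ this codimension is also $\ge n-k+1$, and since $d\ge2$ it is $\ge3$ --- precisely the range in which the Gromov--Lawson type surgery theorem for $Sc_k>0$ of \cite{Wo} applies. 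Thus each handle can be attached while keeping $Sc_k>0$ and, by the standard torpedo/collar modifications localized near the surgery locus, a boundary whose second fundamental form still satisfies the required condition. Iterating over all the handles yields the metric on $M$.

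This middle step is the main obstacle, and it is what pins down the hypotheses: one has to verify that the $Sc_k>0$ constructions of \cite{Wo} can genuinely be performed so as to control the boundary second fundamental form at each stage, and that the codimension count closes up for every admissible $(d,k)$. The truly delicate case is $d=1$, which forces $k=n$: the handle decomposition then only supplies handles of index $\le n-2$, so the surgery codimension in the boundary may drop to $2$, which is too small even for scalar curvature. For that case I would work with the closed double directly: $M\times[0,1]$ is built from $D^{n+1}$ by handles of index $\le n-2$, so its boundary $M\cup_{\partial M}M=\partial(M\times[0,1])$ is obtained from $S^n=\partial D^{n+1}$ by surgeries along framed spheres $S^{\lambda-1}$ of codimension $n-\lambda+1\ge3$ in an $n$-manifold; starting from the round metric on $S^n$, the classical Gromov--Lawson theorem then delivers a metric with $Sc_n>0$. (This last viewpoint actually handles all $d\ge1$ at once, the surgery codimension being $\ge d+2\ge\max(3,\,n-k+2)$, so one could bypass Theorem \ref{main} entirely; treating $d\ge2$ via Theorem \ref{main} simply keeps the argument in line with the preceding corollary.)
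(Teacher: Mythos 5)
Your overall strategy for $d\ge 2$ --- first produce a metric of $Sc_k>0$ on $M$ with suitably convex boundary, then double via Theorem \ref{main}(ii) --- is the paper's strategy, and your reduction of the ``in particular'' statement is fine. The gap is exactly where you flag it: the construction of the metric on $M$. The paper does not iterate a surgery theorem with boundary control; after using Wall \cite{Wa} to present $M$ as built from $\emptyset$ by handles of index at most $n-d-1$, it invokes Sha's handlebody theorem \cite{Sha2} in a single stroke: such a handlebody carries a metric of \emph{positive sectional curvature} whose boundary second fundamental form is $(n-d)$-positive, which is precisely the hypothesis needed for Theorem \ref{main}(ii) when $k\ge n-d$. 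Your replacement --- attaching handles one at a time, appealing to Wolfson's surgery theorem \cite{Wo}, and asserting that ``torpedo/collar modifications'' keep the second fundamental form of the evolving boundary $k$-non-negative --- does not close as written: Wolfson's result is a statement about closed manifolds, and propagating a quantitative convexity condition on $\II$ through a Gromov--Lawson-type bending is exactly the delicate content of Sha's theorem in the $p$-convexity setting, not something one gets for free from the surgery construction.

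Your final paragraph, however, is a complete and correct alternative proof, and it is in substance the paper's own Remark \ref{bordism}: thickening the handle decomposition shows $M\times[0,1]$ is built from $D^{n+1}$ by handles of index at most $n-d-1$, so the double $\partial(M\times[0,1])=M\cup_{\partial M}M$ is obtained from the round $S^n$ by surgeries of codimension at least $d+2\ge\max(3,\,n-k+2)$, and Gromov--Lawson (for $k=n$) or Wolfson (for $k<n$) finishes. This route bypasses Theorem \ref{main} entirely, as you note, but it forfeits the extra features of the paper's construction: positive sectional curvature away from the gluing region, the isometric $\Z/2$-symmetry interchanging the two copies of $M$, and the resulting totally geodesic $\partial M$ (cf.\ Corollary \ref{tot_geodesic}). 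I would therefore restructure your write-up so that the surgery argument is the fallback remark and the Sha-based argument (with the correct citation) is the main proof.
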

The condition $Sc_k>0$ can be preserved under connected sums for all $k\geq 2$, see \cite{Wo}. Note however, that we do not assume in Corollary \ref{double} that $M$ itself admits a metric of $Sc_k>0$. The case $k=n$ and $d=1$, i.e.\ the case where $M$ has positive scalar curvature and the inclusion of the boundary is 1-connected, already follows from a result of Lawson--Michelsohn \cite[Theorem 1.1]{LM}, (which asserts that $M$ admits a Riemannian metric of positive sectional curvature and positive mean curvature on the boundary), in combination with the gluing result for doubles by Gromov--Lawson \cite[Theorem 5.7]{GL1}. The general case can also be proved by using bordism techniques and the surgery results for $Sc_k>0$ of Gromov--Lawson \cite{GL2} (for $k=n$) and of Wolfson \cite{Wo} (for general $k$), see Remark \ref{bordism} below. We note that the metric constructed in the proof of Corollary \ref{double} has the additional property that it has in fact positive sectional curvature outside an arbitrarily small neighbourhood of the gluing area and the metric is invariant under the $\Z/2$ action that interchanges the two copies of $M$. In particular, the glued boundary $\partial M$ is totally geodesic in $M\cup_{\partial} M$.

Theorem \ref{main} can also be used for connected sum constructions for metrics of $Ric_k>0$ similar to Perelman's original applications in \cite{Pe2} for positive Ricci curvature, and Burdick's extensions and generalizations in \cite{Bu1,Bu2,Bu3}. This will be the content of a future article, see \cite{RW3}.

This paper is organized as follows. In Section \ref{main_proof} we prove Theorem \ref{main} by defining an explicit metric near the gluing area to interpolate between the original metrics. The construction of the metric is similar to the original construction in \cite{Pe2}, where a $C^1$-spline interpolation is used. However, in the case of $Ric_k>0$, ($k<n-1$), the analysis of the curvature properties requires additional work, which will be carried out in Subsection \ref{C1}. The smoothing from $C^1$ to $C^\infty$ of the metric is the content of Subsection \ref{C_infty}, and in Subsections \ref{almost-nonnegative_subsec} and \ref{doubling_corollaries} we prove Corollaries \ref{almost-nonnegative} and \ref{tot_geodesic}, \ref{double}, respectively. Finally, in Section \ref{obs_proof} we prove Theorem \ref{obs}.

\section{Proof of Theorem \ref{main}}\label{main_proof}

As discussed in the introduction, the set-up is as follows: we have two manifolds with boundary $M_1,M_2$, equipped with Riemannian metrics satisfying $Ric_k>0$ for some $1 \le k \le n-1$ or $Sc_k>0$ for some $1\leq k\leq n$. We assume that $\partial M_1$ is isometric to $\partial M_2.$ In fact, as explained in \cite[\S2]{BWW}, it suffices to consider the case where $\partial M_1=X=\partial M_2$ and the isometry is the identity. We will make this assumption throughout the sequel.

In \cite{Pe2}, Perelman sketches the proof of his gluing result, giving only minimal details. On gluing the manifolds and metrics along the common boundary, we create a $C^0$ metric overall: $C^0$ along the gluing and smooth elsewhere. The idea is then to delete the metric in a small neighbourhood of $X$, and to replace it with a cubic interpolation to create a $C^1$ metric overall: $C^1$ at the two joining hypersurfaces and smooth elsewhere. The next step is to smooth the $C^1$ joins using a small quintic interpolation to create a $C^2$ metric. From there, passing to a $C^\infty$ metric preserving the $Ric>0$ condition is straightforward.

In \cite[\S2.2]{BWW} the precise details of this construction are presented. The proof of Theorem \ref{main} will rely on a delicate analysis of these details. We will follow the arguments as far as creating a $C^1$ metric join, however we take a different approach to smoothing the $C^1$ metric to a metric of class $C^\infty$. In order to complete this second step, we instead use mollification techniques as laid out in \cite[\S3]{RW1}. 

We will proceed as follows. Given a pair of Riemannian $n$-manifolds $(M_1,h_1),(M_2,h_2)$ with $Ric_k>0$ or $Sc_k>0$, assume that $(\partial M_1,h_1|_{\partial M_1})=(\partial M_2,h_2|_{\partial M_2}).$ We first apply a small deformation to $h_1$ near $\partial M_1$ as in \cite[Proposition 1.2.11]{Bu1} that leaves the metric on $\partial M_1$ unchanged and slightly increases the second fundamental form on $\partial M_1$, so that we can assume that the assumption on $\II_{\partial M_1}+\II_{\partial M_2}$ is strictly satisfied. Now glue the boundaries via the identity map to create a smooth manifold $M_1 \cup M_2$ with $C^0$ Riemannian metric $h_1 \cup h_2$. We can identify a neighbourhood of the gluing with the product $X \times [-\epsilon,\epsilon]$ for some small $\epsilon>0,$ where the second factor encodes the normal parameter with respect to the $C^0$ metric.
\bigskip

\subsection{The $C^1$-smoothing}\label{C1}

Our first task is to replace the metric on this neighbourhood with a new metric $g$ which joins with $h_1$ for $t<-\epsilon$ and with $h_2$ for $t>\epsilon$ to give a $C^1$-metric on $M_1 \cup M_2$. This new metric will take the form
$g=dt^2+g_t$. Since we can also express $h_1$ and $h_2$ in this format in a neighbourhood of the new metric, we can assume $t \in [-\epsilon-\iota,\epsilon+\iota]$ for some small $\iota>0$. Thus $g$ agrees with $h_1$ for $t \in [-\epsilon-\iota,-\epsilon],$ and with $h_2$ for $t \in [\epsilon,\epsilon+\iota].$ Denoting by $h_i(t)$, $i=1,2$, the metric induced by $h_i$ on the hypersurface with constant distance $t$ from $X$, it is easy to check (see \cite[\S2.2]{BWW}) that setting $g_t$ for $t \in [-\epsilon,\epsilon]$ to be the following cubic expression in $t$ will create the desired $C^1$-join:
\begin{equation}\label{eq:g(t)}
\begin{array}{lcl}
  g_t&=& \displaystyle
  \frac{t+\epsilon}{2\epsilon}h_2(\epsilon)
  - \frac{t-\epsilon}{2\epsilon}h_1(-\epsilon)
   + \frac{(t-\epsilon)^2(t+\epsilon)}{4\epsilon^2}
 \left[h_1'(-\epsilon)-\frac{1}{2\epsilon}[h_2(\epsilon)-h_1(-\epsilon)]\right]
\\
  \\
  & & \displaystyle + \frac{(t+\epsilon)^2(t-\epsilon)}{4\epsilon^2}
 \left[h_2'(\epsilon)-\frac{1}{2\epsilon}[h_2(\epsilon)-h_1(-\epsilon)]\right].
\end{array}
\end{equation}

(Note that throughout this section, all quantities under discussion will have $C^2$-dependence on the metrics $h_1,h_2$. For simplicity, this dependency will often be suppressed in the notation used.)

Suppose that $u,v$ are vectors tangent to $X$, which we identify with $X \times \{0\} \subset X \times [-\epsilon,\epsilon].$ We can equally consider the `same' vectors $u_t,v_t$ tangent to any `slice' $X \times \{t\}$, by which we mean that if $u,v$ are the velocity vectors to curves $\gamma(s),\mu(s)$ in $X$ at $s=0,$ then $u_t,v_t$ are the derivatives of $(\gamma(s),t)$ and $(\mu(s),t)$ at $s=0$. In practice we will often suppress the subscript, since it will be clear from the context which slice $u,v$ are tangent to. However, in the case where $t=\pm \epsilon,$ we will usually write $u_{\pm \epsilon},v_{\pm\epsilon}$ to emphasize the location of these vectors. By $\partial_t$ we denote the unit vector field in the $t$-direction on $X\times[-\epsilon,\epsilon]$. Note that for $t=0$ the vector $\partial_t$ is the outward normal w.r.t.\ $h_1$ and the inward normal w.r.t.\ $h_2$. For functions $a_\epsilon,b_\epsilon$ defined on $[-\epsilon,\epsilon]$ for all $\epsilon$ sufficiently small, we say $a_\epsilon=b_\epsilon+O(\epsilon^m)$ as $\epsilon\to 0$ if
\[ \max_{t \in [-\epsilon,\epsilon]}|a_\epsilon(t)-b_\epsilon(t)|=O(\epsilon^m) \]
as $\epsilon\to 0$ in the usual sense.

	\begin{lemma}[{cf.\ also \cite[Proposition 9]{Bu3} and \cite[Lemma 4]{BWW}}]
		\label{g_t&g_t'&g_t''}
		For any $u,v$ tangent to $X$, we have
		\begin{enumerate}
			\item $g_t(u,v)=h_i(0)(u,v)+O(\epsilon)$ as $\epsilon\to 0$ for $i=1,2$,
			\item $g_t'(u,v)=\bigl(\frac{\epsilon-t}{2\epsilon}h_1'(0)+ \frac{\epsilon+t}{2\epsilon}h_2'(0)\bigr)(u,v)+O(\epsilon)$ as $\epsilon\to 0$, and
			\item $g_t''(u,v)=\frac{1}{2\epsilon}(h_2'(0)-h_1'(0))(u,v)+O(1)$ as $\epsilon\to 0$.
		\end{enumerate}
	\end{lemma}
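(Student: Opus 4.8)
The plan is to argue directly from the explicit cubic formula \eqref{eq:g(t)}, evaluated on the fixed pair $u,v$, and to treat everything as a scalar function of $t\in[-\epsilon,\epsilon]$. It is convenient to write
\[
g_t=A(t)+\frac{P(t)}{4\epsilon^2}B_1+\frac{Q(t)}{4\epsilon^2}B_2,\qquad A(t)=\frac{t+\epsilon}{2\epsilon}h_2(\epsilon)-\frac{t-\epsilon}{2\epsilon}h_1(-\epsilon),
\]
where $P(t)=(t-\epsilon)^2(t+\epsilon)$, $Q(t)=(t+\epsilon)^2(t-\epsilon)$, and $B_1=h_1'(-\epsilon)-\tfrac{1}{2\epsilon}[h_2(\epsilon)-h_1(-\epsilon)]$, $B_2=h_2'(\epsilon)-\tfrac{1}{2\epsilon}[h_2(\epsilon)-h_1(-\epsilon)]$ are independent of $t$. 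The whole lemma then rests on two elementary inputs. First, since the boundaries are glued by the identity isometry we have $h_1(0)=h_2(0)=:h$, so a Taylor expansion of $t\mapsto h_i(t)(u,v)$ about $0$ on a fixed neighbourhood gives, uniformly in $\epsilon$, $h_i(\pm\epsilon)(u,v)=h(u,v)\pm\epsilon\,h_i'(0)(u,v)+O(\epsilon^2)$ and $h_i'(\pm\epsilon)(u,v)=h_i'(0)(u,v)+O(\epsilon)$; consequently $\tfrac{1}{2\epsilon}[h_2(\epsilon)-h_1(-\epsilon)](u,v)=\tfrac12(h_1'(0)+h_2'(0))(u,v)+O(\epsilon)$, whence $B_1(u,v)=\tfrac12(h_1'(0)-h_2'(0))(u,v)+O(\epsilon)$ and $B_2(u,v)=-B_1(u,v)+O(\epsilon)$. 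Second, on $[-\epsilon,\epsilon]$ one has the uniform bounds $P,Q=O(\epsilon^3)$, $P'(t)=(t-\epsilon)(3t+\epsilon)=O(\epsilon^2)$, $Q'(t)=(t+\epsilon)(3t-\epsilon)=O(\epsilon^2)$, $P''(t)=6t-2\epsilon=O(\epsilon)$, $Q''(t)=6t+2\epsilon=O(\epsilon)$, together with $\left|\tfrac{t\pm\epsilon}{2\epsilon}\right|\le 1$ and $A''\equiv 0$.

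With these in hand the three statements are assembled as follows. For (i), the two cubic terms are $\tfrac{O(\epsilon^3)}{4\epsilon^2}\cdot O(1)=O(\epsilon)$, while $A(t)(u,v)=\tfrac{t+\epsilon}{2\epsilon}\bigl(h(u,v)+O(\epsilon)\bigr)-\tfrac{t-\epsilon}{2\epsilon}\bigl(h(u,v)+O(\epsilon)\bigr)=h(u,v)+O(\epsilon)$, using $\tfrac{t+\epsilon}{2\epsilon}-\tfrac{t-\epsilon}{2\epsilon}=1$ and boundedness of the coefficients. For (ii) and (iii) I would differentiate the decomposition term by term, noting that $B_1,B_2$ do not depend on $t$. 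The one genuine point here is that the coefficients $\tfrac{P'}{4\epsilon^2},\tfrac{Q'}{4\epsilon^2}=O(1)$ and $\tfrac{P''}{4\epsilon^2},\tfrac{Q''}{4\epsilon^2}=O(1/\epsilon)$ are no longer small, so the cubic corrections must be kept; what saves the day is that their leading parts cancel telescopically. Explicitly, $A'(t)(u,v)=\tfrac{1}{2\epsilon}[h_2(\epsilon)-h_1(-\epsilon)](u,v)=\tfrac12(h_1'(0)+h_2'(0))(u,v)+O(\epsilon)$; the $O(\epsilon)$ errors in $B_1,B_2$ contribute $O(\epsilon)$ to $g_t'$ and $O(1)$ to $g_t''$; and, using $P'(t)-Q'(t)=-4t\epsilon$ and $P''(t)-Q''(t)=-4\epsilon$, the main parts of the cubic terms sum to $\tfrac{P'-Q'}{8\epsilon^2}(h_1'(0)-h_2'(0))(u,v)=-\tfrac{t}{2\epsilon}(h_1'(0)-h_2'(0))(u,v)$ in $g_t'$, and to $\tfrac{P''-Q''}{8\epsilon^2}(h_1'(0)-h_2'(0))(u,v)=\tfrac{1}{2\epsilon}(h_2'(0)-h_1'(0))(u,v)$ in $g_t''$. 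Adding $A'(t)(u,v)$ in the first case (resp.\ $A''\equiv0$ in the second) and rewriting $\tfrac12(h_1'(0)+h_2'(0))-\tfrac{t}{2\epsilon}(h_1'(0)-h_2'(0))=\tfrac{\epsilon-t}{2\epsilon}h_1'(0)+\tfrac{\epsilon+t}{2\epsilon}h_2'(0)$ yields exactly (ii) and (iii).

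The only place where a little care is needed is that all of the $O$-symbols are to be read in the uniform (sup over $t\in[-\epsilon,\epsilon]$) sense fixed in the convention just before the lemma; this is harmless because the polynomial factors above are bounded on $[-\epsilon,\epsilon]$ by the indicated explicit powers of $\epsilon$, and the Taylor remainders for the $h_i$ are controlled on a fixed neighbourhood of $0$. Beyond this bookkeeping I do not expect any real obstacle: the substance of the lemma is precisely the two telescoping identities $P'-Q'=-4t\epsilon$ and $P''-Q''=-4\epsilon$, which are what make the leading behaviour of $g_t'$ and $g_t''$ come out in closed form.
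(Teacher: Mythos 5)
Your argument is correct and follows essentially the same route as the paper: differentiate the explicit cubic \eqref{eq:g(t)} term by term, Taylor-expand $h_i(\pm\epsilon)$ and $h_i'(\pm\epsilon)$ about $t=0$, and exploit the cancellation between the two bracketed correction terms. Your telescoping identities $P'-Q'=-4t\epsilon$ and $P''-Q''=-4\epsilon$ together with $B_2=-B_1+O(\epsilon)$ are just a reorganization of the paper's grouping of the derivative into a $(B_1+B_2)$-part (which is $O(\epsilon)$) and a $(B_2-B_1)$-part, so the substance is identical.
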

	In other words, up to a summand in $O(\epsilon)$, $g_t$ and $\epsilon g_t''$ are constant and $g_t'$ interpolates linearly between $h_1'(0)$ and $h_2'(0)$.
	
	\begin{proof}
		\begin{enumerate}
			\item By definition of $g_t$ we need to consider the modulus of the following expression evaluated at the pair $(u,v)$:
			\begin{align}
				\notag g_t-h_i(0)=& \frac{t}{2\epsilon}(h_2(\epsilon)-h_1(-\epsilon))+\frac{1}{2}(h_2(\epsilon)+h_1(-\epsilon))-h_i(0)\\
				\notag  &+ \frac{(t-\epsilon)^2(t+\epsilon)}{4\epsilon^2}
				\left[h_1'(-\epsilon)-\frac{1}{2\epsilon}[h_2(\epsilon)-h_1(-\epsilon)]\right]\\
				\label{g_t} &+ \frac{(t+\epsilon)^2(t-\epsilon)}{4\epsilon^2}
				\left[h_2'(\epsilon)-\frac{1}{2\epsilon}[h_2(\epsilon)-h_1(-\epsilon)]\right]. 
			\end{align}

By l'H\^opital's rule we have $$\frac{h_2(\epsilon)-h_1(-\epsilon)}{\epsilon} \to h_2'(0)+h_1'(0)$$ as $\epsilon \to 0,$ and therefore $h_2(\epsilon)-h_1(-\epsilon)$ is $O(\epsilon)$ as $\epsilon \to 0.$ Similarly, since $$\lim_{\epsilon \to 0}\frac{h_2(\epsilon)-h_2(0)}{2\epsilon}=\frac{1}{2}h_2'(0) \qquad \text{and} \qquad \lim_{\epsilon \to 0}\frac{h_1(-\epsilon)-h_1(0)}{2\epsilon}=-\frac{1}{2}h'_1(0),$$ we see that $\frac{1}{2}\bigl(h_2(\epsilon)+h_1(-\epsilon)-h_i(0)\bigr)$ is $O(\epsilon)$ as $\epsilon \to 0.$ As $|t| \le \epsilon,$ we deduce that the whole first line of \eqref{g_t} is $O(\epsilon)$.
By the same analysis, the limit of the expression $h_i'(\epsilon)-\frac{1}{2\epsilon}[h_2(\epsilon)-h_1(-\epsilon)]$ is
			\[ h_i'(0)-\frac{1}{2}\left(h_2'(0)-h_1'(0)\right). \]
			It follows that $h_i'(\epsilon)-\frac{1}{2\epsilon}[h_2(\epsilon)-h_1(-\epsilon)]$ remains bounded as $\epsilon\to 0$.	Finally, since $|t|\leq \epsilon$, we have $|(t\pm\epsilon)^2(t\mp\epsilon)|=O(\epsilon^3)$ as $\epsilon\to 0$. Hence, the second and third line of \eqref{g_t} are both in $O(\epsilon)$ as $\epsilon\to 0$.
			
			\item A calculation shows that $g'_t$ is given by
			\begin{align}
				\notag g_t'= & \frac{1}{2\epsilon}(h_2(\epsilon)-h_1(-\epsilon))+\frac{  2(t^2-\epsilon^2)+(t-\epsilon)^2}{4\epsilon^2}\left[ h_1'(-\epsilon)-\frac{1}{2\epsilon}[h_2(\epsilon)-h_1(-\epsilon)]  \right]\\
				\notag &+\frac{2(t^2-\epsilon^2)+(t+\epsilon)^2}{4\epsilon^2}\left[h_2'(\epsilon)-\frac{1}{2\epsilon}[h_2(\epsilon)-h_1(-\epsilon)]  \right]\\
				\notag =& \frac{1}{2\epsilon}(h_2(\epsilon)-h_1(-\epsilon))+\frac{3t^2+\epsilon^2}{4\epsilon^2}\left[ h_1'(-\epsilon)+h_2'(\epsilon)-\frac{1}{\epsilon}[h_2(\epsilon)-h_1(-\epsilon)] \right]\\
				\label{g_t'} &+\frac{t}{2\epsilon}\left[ h_2'(\epsilon)-h_1'(-\epsilon) \right]  .
			\end{align}
			Since $|t|\leq\epsilon$ and since \[h_1'(-\epsilon)+h_2'(\epsilon)-\frac{1}{\epsilon}[h_2(\epsilon)-h_1(-\epsilon)]\to 0\]
			as $\epsilon\to0$, the second term in \eqref{g_t'} is in $O(\epsilon)$ as $\epsilon\to 0$. Further, since 
			\[ \frac{1}{2\epsilon}(h_2(\epsilon)-h_1(-\epsilon))\to \frac{1}{2}(h_1'(0)+h_2'(0)) \quad \text{as }\epsilon\to 0 \]
			and since $h_2'(\epsilon)=h_2'(0)+O(\epsilon)$ and $h_1'(-\epsilon)=h_1'(0)+O(\epsilon)$ as $\epsilon\to 0$, the claim follows.
			\item Finally, we compute
			\begin{align*}
				g_t''=\frac{3t}{2\epsilon^2}\left[ h_1'(-\epsilon)+h_2'(\epsilon)-\frac{1}{\epsilon}\left[h_2(\epsilon)-h_1(-\epsilon) \right] \right]+\frac{1}{2\epsilon}\left[h_2'(\epsilon)-h_1'(-\epsilon) \right].
			\end{align*}
			Similarly as before we see that the first term is in $O(1)$ as $\epsilon\to 0$, so the claim follows.
		\end{enumerate}
	\end{proof}
	\begin{lemma}\label{II}
		For any metric on $[-\epsilon,\epsilon]\times X$ of the form $dt^2+g_t$, where $g_t$ is a metric on $X$, we have
		\[ \II_t=-\frac{1}{2}g_t', \]
		where $\II_t$ denotes the second fundamental form of the hypersurface $\{t\}\times X$.
	\end{lemma}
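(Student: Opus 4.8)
The plan is a direct computation with the Levi-Civita connection $\nabla$ of the product-type metric $g=dt^2+g_t$ on $[-\epsilon,\epsilon]\times X$, exploiting the orthogonal splitting $T([-\epsilon,\epsilon]\times X)=\R\partial_t\oplus TX$. Fix a point of the slice $\{t\}\times X$ and extend $u,v$ to vector fields on a neighbourhood that are tangent to the slices and independent of $t$ (for instance, coordinate vector fields induced by a chart on $X$). Then $[\partial_t,u]=[\partial_t,v]=0$, while $[u,v]$ is again tangent to the slices, and moreover $\langle\partial_t,u\rangle=\langle\partial_t,v\rangle=0$ with $\langle\partial_t,\partial_t\rangle\equiv 1$. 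Plugging these into the Koszul formula
\[
2\langle\nabla_{\partial_t}u,v\rangle=\partial_t\langle u,v\rangle+u\langle v,\partial_t\rangle-v\langle\partial_t,u\rangle+\langle[\partial_t,u],v\rangle+\langle[v,\partial_t],u\rangle-\langle[u,v],\partial_t\rangle
\]
makes every term vanish except the first, so that $2\langle\nabla_{\partial_t}u,v\rangle=\partial_t\langle u,v\rangle=\partial_t\bigl(g_t(u,v)\bigr)=g_t'(u,v)$, where we used that $u,v$ are $t$-independent.

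Since $\partial_t$ has unit length and is orthogonal to the slices, it is a legitimate choice of unit normal $N=\partial_t$ to $\{t\}\times X$, and the sign convention $\II_t(u,v)=-\langle\nabla_N u,v\rangle$ fixed in the introduction then gives $\II_t(u,v)=-\tfrac12 g_t'(u,v)$, which is the asserted identity. The only point deserving a word of comment is that the statement depends on the choice of normal field: replacing $\partial_t$ by $-\partial_t$ flips the sign, and the convention taken here is the one consistent with the role of $\partial_t$ recorded earlier (outward normal with respect to $h_1$, inward with respect to $h_2$ at $t=0$).

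There is no real obstacle here, as this is a standard identity; the computation is routine once the structure equations for $g=dt^2+g_t$ are invoked. If one prefers to avoid the Koszul formula, an equivalent argument runs as follows: from $[\partial_t,u]=0$ and $\nabla$ being torsion-free one gets $\nabla_{\partial_t}u=\nabla_u\partial_t$, hence $\langle\nabla_{\partial_t}u,v\rangle+\langle u,\nabla_{\partial_t}v\rangle=\partial_t\langle u,v\rangle=g_t'(u,v)$; combining this with the symmetry $\langle\nabla_u\partial_t,v\rangle=\langle\nabla_v\partial_t,u\rangle$ (which follows from torsion-freeness together with $\langle[u,v],\partial_t\rangle=0$) yields $\langle\nabla_{\partial_t}u,v\rangle=\tfrac12 g_t'(u,v)$, and the definition of $\II_t$ finishes the proof.
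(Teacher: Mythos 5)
Your proposal is correct and, in substance, the same as the paper's: the paper's own proof is exactly your closing ``alternative'' argument (use $\nabla_u\partial_t=\nabla_{\partial_t}u$ from torsion-freeness, metric compatibility to get $\partial_t g_t(u,v)$, and the symmetry of $\II_t$ to produce the factor $\tfrac12$). Your primary route via the Koszul formula is an equivalent routine computation, and your care about extending $u,v$ $t$-independently and about the choice of unit normal $N=\partial_t$ is a welcome (if minor) tightening of the paper's one-line argument.
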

	\begin{proof}
		Notice that
		\[\II_t(u,v)=-g(\nabla_u\partial_t,v)=-g(\nabla_{\partial_t} u,v)=-\partial_t g(u,v)+g(u,\nabla_{\partial_t}v)=-g_t'(u,v)-\II_t(v,u). \] 
		Since $\II$ is symmetric, this gives $\II(u,v)=-g'_t(u,v)/2$
	\end{proof}
	Denote by $K(u,v)$ the sectional curvature of the plane spanned by $u,v$ in $X\times [-\epsilon,\epsilon]$ and let $K_t(u,v)$ denote the corresponding sectional curvature of $X\times\{t\}$ with respect to the induced metric. Set $$\phi_t(u,v)=g_t'(u,u)g_t'(v,v)-g_t'(u,v)^2,$$ and $$\psi_t(u,v)=4\bigl(g_t(u,u)g_t(v,v)-g_t(u,v)^2\bigr).$$
	\begin{lemma}\label{K}
		For linearly independent $u,v$ tangent to $X$ we have
		\begin{enumerate}
			\item $K(u_t,v_t)=K_t(u,v)-\frac{\phi_t(u,v)}{\psi_t(u,v)}$ and $K_{t}(u,v)=K_{h_i(0)}(u,v)+O(\epsilon)$ as $\epsilon\to 0$,
			\item $K(u_t,\partial_t)= \frac{1}{2\epsilon}(h_1'(0)-h_2'(0))(u,u) +O(1)$ as $\epsilon\to 0$.
		\end{enumerate}
	\end{lemma}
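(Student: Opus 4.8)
The plan is to prove both statements by reducing the curvature of the product-type metric $dt^2 + g_t$ on $X \times [-\epsilon,\epsilon]$ to data on the slices together with second-fundamental-form terms, and then feeding in the asymptotics from Lemma \ref{g_t&g_t'&g_t''}. For (i), the first identity $K(u_t,v_t) = K_t(u,v) - \phi_t(u,v)/\psi_t(u,v)$ is simply the Gauss equation for the hypersurface $\{t\}\times X$ inside $X\times[-\epsilon,\epsilon]$: the correction term is $\langle \II_t(u,u),\II_t(v,v)\rangle - |\II_t(u,v)|^2$ divided by $|u|^2|v|^2 - \langle u,v\rangle^2$, and by Lemma \ref{II} we have $\II_t = -\tfrac12 g_t'$, so the numerator becomes $\tfrac14(g_t'(u,u)g_t'(v,v) - g_t'(u,v)^2) = \tfrac14\phi_t(u,v)$ and the denominator is $\tfrac14\psi_t(u,v)$ — the factors of $4$ in the definition of $\psi_t$ are exactly arranged so the ratio comes out as stated. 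For the second part of (i), $K_t(u,v) = K_{h_i(0)}(u,v) + O(\epsilon)$, I would argue that the sectional curvature $K_t$ is a smooth (indeed $C^2$) function of the $2$-jet of the metric $g_t$ on $X$; by Lemma \ref{g_t&g_t'&g_t''}(i) we have $g_t = h_i(0) + O(\epsilon)$, and one similarly checks that the first and second spatial derivatives of $g_t$ (in directions along $X$) converge to those of $h_i(0)$ up to $O(\epsilon)$, since differentiating \eqref{eq:g(t)} in the $X$-directions just replaces $h_1(-\epsilon), h_2(\epsilon), h_1'(-\epsilon), h_2'(\epsilon)$ by their spatial derivatives, and the same l'Hôpital estimates apply. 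Hence $K_t$, evaluated on the fixed pair $(u,v)$, differs from $K_{h_i(0)}(u,v)$ by $O(\epsilon)$, uniformly in $t\in[-\epsilon,\epsilon]$.

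For (ii), the quantity $K(u_t,\partial_t)$ is the sectional curvature of a plane containing the normal direction, so I would use the standard radial-curvature (Riccati/Jacobi) formula for a metric of the form $dt^2 + g_t$: one has $K(u_t,\partial_t) = -\tfrac{g(R(u,\partial_t)\partial_t, u)}{g_t(u,u)}$ and the mixed curvature term can be written in terms of the shape operator $S_t = -(\II_t)^\sharp$ via $R(u,\partial_t)\partial_t = -(\nabla_{\partial_t}S_t)(u) - S_t^2(u)$ (the Riccati equation), which in our coordinates translates to $g(R(u,\partial_t)\partial_t,u) = -\tfrac12 g_t''(u,u) + \tfrac14(g_t' g_t^{-1} g_t')(u,u)$ up to the usual corrections; more concretely, the standard formula gives $g(R(u_t,\partial_t)\partial_t, u_t) = -\tfrac12\partial_t^2 g_t(u,u) + \tfrac14 g_t^{kl} (\partial_t g_{ik})(\partial_t g_{jl}) u^i u^j$ in local coordinates. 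The dominant term as $\epsilon\to 0$ is the one involving $g_t''$: by Lemma \ref{g_t&g_t'&g_t''}(iii), $g_t''(u,u) = \tfrac1{2\epsilon}(h_2'(0) - h_1'(0))(u,u) + O(1)$, which blows up like $1/\epsilon$, whereas by Lemma \ref{g_t&g_t'&g_t''}(ii) the $g_t'$-terms (and $g_t$ in the denominator, which is $h_i(0)(u,u)+O(\epsilon)$) stay bounded. Dividing by $g_t(u,u)$ and being slightly careful that the unit-norm normalization of $u_t$ only changes things by a bounded factor — or, cleaner, stating the formula for general (not necessarily unit) $u$ and tracking the leading term — yields $K(u_t,\partial_t) = -\tfrac1{2}\cdot\tfrac1{2\epsilon}(h_2'(0)-h_1'(0))(u,u)/|u|^2 + O(1) = \tfrac1{2\epsilon}(h_1'(0) - h_2'(0))(u,u) + O(1)$ once $u$ is taken of unit length w.r.t. the relevant metric; I would be slightly cautious here about the precise normalization convention and the placement of the factor $\tfrac12$, matching signs against the sign convention for $\II$ fixed in the introduction.

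The main obstacle I anticipate is bookkeeping rather than conceptual: getting the constants and signs in the radial curvature formula for $dt^2 + g_t$ exactly right (there are several competing sign conventions, and the paper's $\II(u,v) = -\langle\nabla_N u, v\rangle$ convention must be threaded through consistently), and making sure the $O(\epsilon)$ and $O(1)$ error estimates are genuinely uniform in $t\in[-\epsilon,\epsilon]$ and in the (fixed, but otherwise arbitrary) pair $u,v$ — the latter requires observing that all the coefficient functions of $t$ appearing in \eqref{eq:g(t)} and its derivatives are polynomials in $t/\epsilon$ with bounded coefficients, together with the l'Hôpital-type limits already established in the proof of Lemma \ref{g_t&g_t'&g_t''}. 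A secondary subtlety is that $K_t$ and $K$ depend on $u,v$ only through the plane they span, so I should either fix a convenient normalization (e.g. $u,v$ orthonormal for $h_i(0)$) or note that both sides of each asserted identity are invariant under the relevant rescalings; I would choose the normalization that makes $\psi_t$ and the denominators transparent.
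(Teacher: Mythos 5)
Your proposal follows essentially the same route as the paper's proof: part (i) is the Gauss equation combined with Lemma \ref{II} (with the factors of $4$ in $\psi_t$ absorbing the $\tfrac14$ from $\II_t=-\tfrac12 g_t'$), the second claim of (i) is the observation that $K_t$ depends only on the $2$-jet of $g_t$ in $X$-directions and that the estimates of Lemma \ref{g_t&g_t'&g_t''}(i) persist under $X$-direction differentiation, and part (ii) is the radial curvature formula for $dt^2+g_t$ fed through Lemma \ref{g_t&g_t'&g_t''}(iii); the only difference is that the paper quotes the radial formula from \cite[Lemma 5]{BWW} rather than deriving it. One point to tidy up: the final arithmetic step of your part (ii) is not an equality as written, since with your (correct) formula $\langle R(u_t,\partial_t)\partial_t,u_t\rangle=-\tfrac12 g_t''(u,u)+O(1)$ one gets $-\tfrac12\cdot\tfrac1{2\epsilon}(h_2'(0)-h_1'(0))(u,u)=\tfrac1{4\epsilon}(h_1'(0)-h_2'(0))(u,u)$, i.e.\ a coefficient $\tfrac1{4\epsilon}$ rather than the stated $\tfrac1{2\epsilon}$ --- the caution you express about ``the placement of the factor $\tfrac12$'' is warranted. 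This discrepancy is a harmless positive constant: everywhere the lemma is used (Propositions \ref{C^1_result}, \ref{C^1_Sc_k} and Lemma \ref{Ricci_estimates}) only the sign of $(h_1'(0)-h_2'(0))(u,u)$ and the $\epsilon^{-1}$ blow-up rate matter, so your argument is sound, but you should either derive the constant honestly or state the estimate with an unspecified positive constant rather than write an equality that does not hold.
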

	\begin{proof}
		For (i) we apply the Gauss formula (see for example \cite[Theorem 2.2]{Ch}):
		$$K(u,v)=K_t(u,v)-\frac{\II_t(u,u)\II_t(v,v)-\II_t(u,v)^2}{g_t(u,u)g_t(v,v)-g_t(u,v)^2}.$$
		The first claim now follows from Lemma \ref{II}.
		
		The second statement of (i) follows from the observation that nothing changes in the proof of (i) in Lemma \ref{g_t&g_t'&g_t''} if we replace $g_t$ and $h_i(0)$ by any $X$-direction derivative of $g_t$ (of any order):  we merely replace $h_1(-\epsilon),$  $h_2(\epsilon),$ $h'_1(-\epsilon)$ and $h'_2(\epsilon)$ by the appropriate $X$-direction derivatives, but the subsequent analysis is still valid. As the (intrinsic) sectional curvatures $K_t$ of the metrics $g_t$ depend only on $g_t$ and its first and second $X$-direction derivatives, the claim follows.
		
		For (ii), note that $K(u_t,\partial_t)$ can be written as the sum of $-g_t''(u,u)$ and a term only depending on $g_t$ and $g_t'$, cf.\ \cite[Lemma 5]{BWW}. The claim now follows from Lemma \ref{g_t&g_t'&g_t''}.
	\end{proof}

	The following proposition will be important in the proof of (i) of Theorem \ref{main}.
	
	\begin{proposition}\label{K_lower_bound}
		Suppose that $h_1'(0)-h_2'(0)$ is positive definite. Then, for linearly independent $u,v$ tangent to $X$, we have
		\[ K_{g}(u_t,v_t)\geq \frac{\epsilon-t}{2\epsilon}K_{h_1}(u_0,v_0)+\frac{\epsilon+t}{2\epsilon}K_{h_2}(u_0,v_0)+O(\epsilon) \]
		as $\epsilon \to 0$, where the $O(\epsilon)$-bound is independent of $u$ and $v$.
	\end{proposition}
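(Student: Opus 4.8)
The plan is to start from the exact Gauss-equation identity in Lemma \ref{K}(i), namely
\[
K_g(u_t,v_t)=K_t(u,v)-\frac{\phi_t(u,v)}{\psi_t(u,v)},
\]
and to analyze the two terms separately. The intrinsic term is immediately handled: by the second part of Lemma \ref{K}(i) we have $K_t(u,v)=K_{h_i(0)}(u,v)+O(\epsilon)$ for $i=1,2$, so in particular $K_t(u,v)=\tfrac{\epsilon-t}{2\epsilon}K_{h_1}(u_0,v_0)+\tfrac{\epsilon+t}{2\epsilon}K_{h_2}(u_0,v_0)+O(\epsilon)$ since $\tfrac{\epsilon-t}{2\epsilon}+\tfrac{\epsilon+t}{2\epsilon}=1$ and $|t|\le\epsilon$. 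So the entire content of the proposition is the claim that the correction term satisfies $\phi_t(u,v)/\psi_t(u,v)\le O(\epsilon)$, i.e. the second fundamental form correction to sectional curvature is non-positive up to a uniformly small error, given that $h_1'(0)-h_2'(0)$ is positive definite. (Here I should be careful that $O(\epsilon)$ is meant to be independent of $u,v$, so I will want to normalize $u,v$ — say to be $g_0$-orthonormal, or to work with the projectivized pair — and make all estimates uniform over that compact set.)

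Next I would unwind $\phi_t$. By Lemma \ref{g_t&g_t'&g_t''}(ii), $g_t'(u,v)=\ell_t(u,v)+O(\epsilon)$ where $\ell_t:=\tfrac{\epsilon-t}{2\epsilon}h_1'(0)+\tfrac{\epsilon+t}{2\epsilon}h_2'(0)$ is the linear interpolant, and the $O(\epsilon)$ is uniform on the normalized set. Substituting into $\phi_t(u,v)=g_t'(u,u)g_t'(v,v)-g_t'(u,v)^2$ and expanding, each cross term involving an $O(\epsilon)$ factor is multiplied by a bounded quantity (the entries of $\ell_t$ are bounded since $h_1'(0),h_2'(0)$ are fixed tensors and $t/\epsilon$ is bounded), so $\phi_t(u,v)=\ell_t(u,u)\ell_t(v,v)-\ell_t(u,v)^2+O(\epsilon)$. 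Meanwhile $\psi_t(u,v)=4(g_t(u,u)g_t(v,v)-g_t(u,v)^2)=4(h_i(0)(u,u)h_i(0)(v,v)-h_i(0)(u,v)^2)+O(\epsilon)$ by Lemma \ref{g_t&g_t'&g_t''}(i), and for linearly independent $u,v$ this leading term is the positive quantity $4$ times the squared $h_0$-area of the parallelogram; after normalizing $u,v$ to be $g_0$-orthonormal it equals $4+O(\epsilon)$, in particular bounded below away from $0$. So the only thing left to show is that the \emph{numerator} $\ell_t(u,u)\ell_t(v,v)-\ell_t(u,v)^2$ is $\ge -O(\epsilon)$... but wait, that is not true in general. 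The actual claim needed is the \emph{opposite} inequality direction, so I need $\ell_t(u,u)\ell_t(v,v)-\ell_t(u,v)^2\ge 0$, i.e. that the symmetric bilinear form $\ell_t$ has non-negative $2\times 2$ principal minors along every pair $(u,v)$.

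This is exactly where the hypothesis enters, and it is the main obstacle. We know $h_1'(0)-h_2'(0)$ is positive definite. By Lemma \ref{II}, $\II_{\partial M_i}=-\tfrac12 h_i'(0)$ (with signs arranged so that the boundaries are glued compatibly — recall $\partial_t$ is outward for $h_1$ and inward for $h_2$, which is why the relevant combination of second fundamental forms corresponds to $h_1'(0)-h_2'(0)$). The key algebraic fact to establish is: if $h_1'(0)-h_2'(0)>0$, then the linear interpolant $\ell_t=\lambda h_1'(0)+(1-\lambda)h_2'(0)$ with $\lambda=\tfrac{\epsilon-t}{2\epsilon}\in[0,1]$ satisfies $\ell_t(u,u)\ell_t(v,v)\ge\ell_t(u,v)^2$ for all $u,v$. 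I do \emph{not} expect this to hold for arbitrary symmetric forms $h_1'(0),h_2'(0)$; what makes it work is presumably an additional structural input coming from the geometry — namely that each $g_t'$, being $-2\II_t$ for a metric of the form $dt^2+g_t$, and with $g$ having $Ric_k>0$ hence in particular (for the relevant range) controlled sign of $K(u_t,\partial_t)$, forces a sign on the relevant minors; or, more likely, the honest statement is that $\phi_t(u,v)/\psi_t(u,v)$ is handled by writing it as $K_t(u,v)-K_g(u_t,v_t)$ and invoking that along the pure cubic region the extrinsic curvature term $K(u_t,\partial_t)$ blows up like $\tfrac{1}{2\epsilon}(h_1'(0)-h_2'(0))(u,u)+O(1)>0$ (Lemma \ref{K}(ii)), which by an application of the Cauchy–Schwarz / $2\times2$-minor inequality on the shape operator forces $\phi_t\ge -O(\epsilon)\psi_t$. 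Concretely: diagonalize the shape operator $S_t=-g_t^{-1}\II_t^{\flat}=\tfrac12 g_t^{-1}g_t'$ at the given point; its eigenvalues $\mu_1,\dots,\mu_{n-1}$ are, by Lemmas \ref{g_t&g_t'&g_t''} and \ref{K}, each of the form (bounded) except that the \emph{sum against any direction} is controlled, and the quantity $\phi_t/\psi_t$ restricted to a $2$-plane equals $\mu_i\mu_j$ in the eigenbasis. So the real task is to show every \emph{pairwise product} $\mu_i\mu_j$ of eigenvalues of $S_t$ is $\ge-O(\epsilon)$; equivalently, at most one eigenvalue is negative (up to $O(\epsilon)$). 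Since $S_t=\tfrac12 g_t^{-1}\big(\lambda h_1'(0)+(1-\lambda)h_2'(0)\big)+O(\epsilon)=-\big(\lambda S^{(1)}+(1-\lambda)(-S^{(2)})\big)+O(\epsilon)$ where $S^{(i)}$ are the shape operators of $\partial M_i$ — I will need the hypothesis $\II_{\partial M_1}+\II_{\partial M_2}>0$, i.e. $h_1'(0)-h_2'(0)>0$, to conclude $\lambda h_1'(0)+(1-\lambda)h_2'(0)\ge\lambda h_1'(0)+(1-\lambda)h_1'(0) - (1-\lambda)(h_1'(0)-h_2'(0))$, hmm — more cleanly, write $\ell_t=\lambda(h_1'(0)-h_2'(0))+h_2'(0)$; since $h_1'(0)-h_2'(0)>0$ and $\lambda\ge0$, $\ell_t\ge h_2'(0)$ in the PSD order, and symmetrically $\ell_t\ge h_1'(0)$ when written via $1-\lambda$. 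Thus $\ell_t$ dominates \emph{both} $h_1'(0)$ and $h_2'(0)$, and I'll argue that any symmetric form dominating two given forms has all its negative-eigenvalue contributions confined to where \emph{both} reference forms are negative; carefully this gives: if $u$ spans a direction with $\ell_t(u,u)<0$ then $h_1'(0)(u,u)<0$ and $h_2'(0)(u,u)<0$, and then on the orthogonal complement one checks $\ell_t$ is positive — so $\ell_t$ has at most... no, this still isn't automatic. I will therefore set up the proof to hinge on precisely this linear-algebra lemma — \textbf{if $P>0$ is positive definite and $\lambda\in[0,1]$ then for $A$ symmetric the form $\lambda(A+P)+(1-\lambda)A=A+\lambda P$ has the property that the product of any two of its eigenvalues is bounded below by $-C$ where $C$ depends only on $\|A\|$ and the smallest eigenvalue of $P$ can be ignored** — and prove that lemma directly by the spectral decomposition, deducing $\phi_t(u,v)\ge -O(\epsilon)\psi_t(u,v)$ uniformly over $g_0$-orthonormal $u,v$, whence $\phi_t/\psi_t\le O(\epsilon)$ and the proposition follows by combining with the intrinsic estimate above.
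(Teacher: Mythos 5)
There is a genuine gap, and it starts with a misreading of the statement. You reduce the proposition to showing $\phi_t(u,v)/\psi_t(u,v)\le O(\epsilon)$ by asserting that $K_t(u,v)=K_{h_i(0)}(u,v)+O(\epsilon)$ already equals the convex combination $\tfrac{\epsilon-t}{2\epsilon}K_{h_1}(u_0,v_0)+\tfrac{\epsilon+t}{2\epsilon}K_{h_2}(u_0,v_0)+O(\epsilon)$. But $K_{h_i(0)}(u,v)$ is the \emph{intrinsic} sectional curvature of the slice $(X,h_i(0))$ (the same for $i=1,2$, since the boundaries are isometric), whereas the $K_{h_i}(u_0,v_0)$ appearing in the proposition are the \emph{ambient} sectional curvatures of $(M_i,h_i)$ at the boundary. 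These differ by exactly the Gauss-equation correction $\bigl(\II_{\partial M_i}(u,u)\II_{\partial M_i}(v,v)-\II_{\partial M_i}(u,v)^2\bigr)/\bigl(h_i(u,u)h_i(v,v)-h_i(u,v)^2\bigr)$, which is $O(1)$, not $O(\epsilon)$. Consequently the claim you try to prove, $\phi_t/\psi_t\le O(\epsilon)$, is both unnecessary and false in general (as you yourself notice: a convex combination of two symmetric forms need not have non-negative $2\times 2$ minors). What the proposition actually requires is the weaker and correct statement that $\phi_t/\psi_t$ is bounded \emph{above}, up to $O(\epsilon)$, by the convex combination $\tfrac{\epsilon-t}{2\epsilon}\cdot(\text{Gauss correction of }h_1)+\tfrac{\epsilon+t}{2\epsilon}\cdot(\text{Gauss correction of }h_2)$. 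Your attempt never recovers from the false reduction: the closing ``linear-algebra lemma'' is not a coherent statement and is left unproved.

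The missing idea is \emph{convexity in $t$}. Writing $\overline{h}'_t=\tfrac{\epsilon-t}{2\epsilon}h_1'(0)+\tfrac{\epsilon+t}{2\epsilon}h_2'(0)$ (which you do introduce as $\ell_t$), the quantity $\overline{\phi}_t(u,v)=\overline{h}'_t(u,u)\overline{h}'_t(v,v)-\overline{h}'_t(u,v)^2$ is a quadratic polynomial in $t$ whose leading coefficient is $\tfrac{1}{4\epsilon^2}\det\bigl((h_1'(0)-h_2'(0))|_{\mathrm{span}(u,v)}\bigr)$, which is positive precisely because $h_1'(0)-h_2'(0)$ is positive definite. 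Hence $\overline{\phi}_t$ is convex in $t$ and therefore lies below the chord joining its endpoint values $\overline{\phi}_{\pm\epsilon}$, which (since $\overline{h}'_{-\epsilon}=h_1'(0)$ and $\overline{h}'_{\epsilon}=h_2'(0)$) are exactly the Gauss corrections for $h_1$ and $h_2$. Dividing by $\overline{\psi}$ (which is $t$-independent and bounded below on the compact set of $2$-planes, giving uniformity of the $O(\epsilon)$), subtracting from $K_t(u,v)=K_{h_i(0)}(u,v)+O(\epsilon)$, and regrouping via the Gauss formula for each $h_i$ yields the claimed interpolation of the \emph{ambient} curvatures. You had all the ingredients on the table (the interpolant $\ell_t$, the $2\times 2$ determinant, the positive definiteness of $h_1'(0)-h_2'(0)$) but applied the determinant condition to the wrong object: it is the determinant of the \emph{difference} $h_1'(0)-h_2'(0)$ restricted to the plane that matters (as the second $t$-derivative of $\overline{\phi}_t$), not the minors of the interpolant itself.
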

	\begin{proof}
		We first consider the quotient $\frac{\phi_t(u,v)}{\psi_t(u,v)}$. We set 
		\[\overline{\psi}(u,v)=4(h_i(u,u)h_i(v,v)-h_i(u,v)^2) \]
		where we have written $h_i(u,u)$ as shorthand for $h_i(0)(u,u)$, etc. Note that this definition is independent of $i\in\{1,2\}$. Then, by Lemma \ref{g_t&g_t'&g_t''}, we have
		\[ \psi_t(u,v)=\overline{\psi}(u,v)+O(\epsilon) \]
		as $\epsilon\to 0$. Hence,
		\[ \frac{1}{\psi_t(u,v)}=\frac{1}{\overline{\psi}(u,v)}+\frac{\overline{\psi}(u,v)-\psi_t(u,v)}{\psi_t(u,v)\overline{\psi}(u,v)}=\frac{1}{\overline{\psi}(u,v)}+O(\epsilon) \]
		as $\epsilon\to 0$.
		
		For $\phi_t(u,v)$, if we set
		\[\overline{h}'_t=\frac{\epsilon-t}{2\epsilon}h_1'(0)+\frac{\epsilon+t}{2\epsilon}h_2'(0), \]
		then it follows from Lemma \ref{g_t&g_t'&g_t''} that
		\[ \phi_t(u,v)=\overline{\phi}_t(u,v)+O(\epsilon), \]
		where 
		\[ \overline{\phi}_t(u,v)=\overline{h}_t'(0)(u,u)\overline{h}_t'(0)(v,v)-\overline{h}_t'(0)(u,v)^2. \]
		Note that $\overline{\phi}_t(u,v)$ is a polynomial in $t$ of degree $2$. Its derivative at $t=\pm\epsilon$ is given by
		\begin{align*}
			\overline{\phi}_{\pm\epsilon}'(u,v)=&\frac{1}{2\epsilon}(h_2'(u,u)-h_1'(u,u))h_i'(v,v)+\frac{1}{2\epsilon}h_i'(u,u)(h_2'(v,v)-h_1'(v,v))\\
			&-\frac{1}{\epsilon}(h_2'(u,v)-h_1'(u,v))h_i'(u,v),
		\end{align*}
		where $i=1$ for $t=-\epsilon$ and $i=2$ for $t=\epsilon$. It follows that
		\begin{align*}
			\epsilon(\overline{\phi}'_\epsilon(u,v)-\overline{\phi}'_{-\epsilon}(u,v))&= (h_1'(u,u)-h_2'(u,u))(h_1'(v,v)-h_2'(v,v))-(h_1'(u,v)-h_2'(u,v))^2\\
			&=\det\begin{pmatrix}
				h_1'(u,u)-h_2'(u,u) & h_1'(u,v)-h_2'(u,v) \\
				h_1'(u,v)-h_2'(u,v) & h'_1(v,v)-h_2'(v,v) \\
			\end{pmatrix}.
		\end{align*}
		Since, by assumption, $h_1'(0)-h_2'(0)$ is positive definite, its restriction to the $2$-plane spanned by $u$ and $v$ is positive definite. Hence, the last expression is positive for all linearly independent $u$ and $v$. Thus, as a polynomial of degree 2 with increasing derivative, $\overline{\phi}_t(u,v)$ is a convex function in $t$.
		
		We now consider the quotient $\frac{\overline{\phi}_t(u,v)}{\overline{\psi}(u,v)}$. Since $\overline{\psi}$ does not depend on $t$, this quotient is also a convex function in $t$. Furthermore, the quotients $\frac{\overline{\phi}_t(u,v)}{\overline{\psi}(u,v)}$ and $\frac{\phi_t(u,v)}{\psi(u,v)}$ only depend on the $2$-plane spanned by $u$ and $v$ as they are the determinants of the bilinear forms $\frac{1}{4}\overline{h}_t'$ and $\frac{1}{4}g_t'$, respectively, restricted to the plane spanned by $u$ and $v$. Since the space of $2$-planes in a given tangent space is compact, we have $\frac{\phi_t(u,v)}{\psi_t(u,v)}=\frac{\overline{\phi}_t(u,v)}{\overline{\psi}(u,v)}+O(\epsilon)$, where the upper bound in $O(\epsilon)$ is independent of $u$ and $v$.
		
		To summarize, we have shown
		\[ \frac{\phi_t(u,v)}{\psi_t(u,v)}=\frac{\overline{\phi}_t(u,v)}{\overline{\psi}(u,v)}+O(\epsilon) \] 
		for $\epsilon\to 0$, where the upper bound in $O(\epsilon)$ is independent of $u$ and $v$, and where $\frac{\overline{\phi}_t(u,v)}{\overline{\psi}(u,v)}$ is convex for all linearly independent $u$ and $v$. It follows from the convexity property that 
		\begin{align*}
			\frac{\overline{\phi}_t(u,v)}{\overline{\psi}(u,v)}\leq& \Bigl(\frac{\epsilon-t}{2\epsilon}\Bigr)\frac{\overline{\phi}_{-\epsilon}(u,v)}{\overline{\psi}(u,v)}+\Bigl(\frac{\epsilon+t}{2\epsilon}\Bigr)\frac{\overline{\phi}_{\epsilon}(u,v)}{\overline{\psi}(u,v)}\\ \\
			=& \Bigl(\frac{\epsilon-t}{2\epsilon}\Bigr)\frac{h_1'(u,u)h_1'(v,v)-h_1'(u,v)^2}{4(h_1(u,u)h_1(v,v)-h_1(u,v)^2)}\\ 
			&+\Bigl(\frac{\epsilon+t}{2\epsilon}\Bigr)\frac{h_2'(u,u)h_2'(v,v)-h_2'(u,v)^2}{4(h_2(u,u)h_2(v,v)-h_2(u,v)^2)},\\ \\
			=& \Bigl(\frac{\epsilon-t}{2\epsilon}\Bigr)\frac{\II_{\partial M_1}(u,u)\II_{\partial M_1}(v,v)-\II_{\partial M_1}(u,v)^2}{h_1(u,u)h_1(v,v)-h_1(u,v)^2}\\
			&+\Bigl(\frac{\epsilon+t}{2\epsilon}\Bigr)\frac{\II_{\partial M_2}(u,u)\II_{\partial M_2}(v,v)-\II_{\partial M_2}(u,v)^2}{h_2(u,u)h_2(v,v)-h_2(u,v)^2}
		\end{align*}
		where each metric and derivative is considered at $t=0$ and $\II_{\partial M_i}$ is taken with respect to $h_i$. Hence, by Lemma \ref{K}, it follows that
		\begin{align*}
			K(u_t,v_t)&=K_t(u,v)-\frac{\phi_t(u,v)}{\psi_t(u,v)}\\ \\
			=&K_{h_i(0)}(u,v)-\frac{\overline{\phi}_t(u,v)}{\overline{\psi}(u,v)}+O(\epsilon)\\ \\
			\geq& K_{h_i(0)}(u,v)-\Bigl(\frac{\epsilon-t}{2\epsilon}\Bigr)\frac{\II_{\partial M_1}(u,u)\II_{\partial M_1}(v,v)-\II_{\partial M_1}(u,v)^2}{h_1(u,u)h_1(v,v)-h_1(u,v)^2}\\
			&-\Bigl(\frac{\epsilon+t}{2\epsilon}\Bigr)\frac{\II_{\partial M_2}(u,u)\II_{\partial M_2}(v,v)-\II_{\partial M_2}(u,v)^2}{h_2(u,u)h_2(v,v)-h_2(u,v)^2}+O(\epsilon)\\ \\
			=&\Bigl(\frac{\epsilon-t}{2\epsilon}\Bigr)\left(K_{h_1(0)}(u,v)-\frac{\II_{\partial M_1}(u,u)\II_{\partial M_1}(v,v)-\II_{\partial M_1}(u,v)^2}{h_1(u,u)h_1(v,v)-h_1(u,v)^2}\right)\\
			&+\Bigl(\frac{\epsilon+t}{2\epsilon}\Bigr)\left(K_{h_2(0)}(u,v) -\frac{\II_{\partial M_2}(u,u)\II_{\partial M_2}(v,v)-\II_{\partial M_2}(u,v)^2}{h_2(u,u)h_2(v,v)-h_2(u,v)^2}\right)+O(\epsilon)\\ \\
			=&\Bigl(\frac{\epsilon-t}{2\epsilon}\Bigr)K_{h_1}(u_0,v_0)+\Bigl(\frac{\epsilon+t}{2\epsilon}\Bigr)K_{h_2}(u_0,v_0)+O(\epsilon).
		\end{align*}
	\end{proof}
	
	\begin{corollary}\label{Ric_k-interpolation}
		Suppose that $h_1'(0)-h_2'(0)$ is positive definite. Let $(v,e^1,\dots,e^k)$ be a $(k+1)$-frame of linearly independent vectors tangent to $X$. Then
		\[ \sum_{i=1}^{k}K_g(v_t,e^i_t)\geq \frac{\epsilon-t}{2\epsilon}\sum_{i=1}^{k}K_{h_1}(v_0,e^i_0)+\frac{\epsilon+t}{2\epsilon}\sum_{i=1}^{k}K_{h_2}(v_0,e^i_0)+O(\epsilon) \]
		as $\epsilon\to 0$.
	\end{corollary}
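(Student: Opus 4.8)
The plan is to obtain the corollary simply by summing the pointwise estimate of Proposition \ref{K_lower_bound} over the $k$ two-planes determined by the given frame. Since $(v,e^1,\dots,e^k)$ is a frame of linearly independent vectors tangent to $X$, for each fixed $i\in\{1,\dots,k\}$ the vectors $v$ and $e^i$ are linearly independent and tangent to $X$, so Proposition \ref{K_lower_bound} applies to the pair $(v,e^i)$ and yields
\[ K_g(v_t,e^i_t)\geq \frac{\epsilon-t}{2\epsilon}K_{h_1}(v_0,e^i_0)+\frac{\epsilon+t}{2\epsilon}K_{h_2}(v_0,e^i_0)+O(\epsilon) \]
as $\epsilon\to 0$, with an error bound that does not depend on $v$ or $e^i$.

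Adding these $k$ inequalities gives
\[ \sum_{i=1}^{k}K_g(v_t,e^i_t)\geq \frac{\epsilon-t}{2\epsilon}\sum_{i=1}^{k}K_{h_1}(v_0,e^i_0)+\frac{\epsilon+t}{2\epsilon}\sum_{i=1}^{k}K_{h_2}(v_0,e^i_0)+\sum_{i=1}^{k}O(\epsilon). \]
Because the $O(\epsilon)$-term supplied by Proposition \ref{K_lower_bound} is uniform over all pairs of linearly independent tangent vectors, and because $k\le n-1$ is a fixed integer independent of $\epsilon$, the aggregate error $\sum_{i=1}^{k}O(\epsilon)$ is again $O(\epsilon)$ as $\epsilon\to 0$, uniformly in the chosen $(k+1)$-frame. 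This is precisely the asserted inequality. There is no genuine obstacle at this stage: the entire analytic content has already been absorbed into Proposition \ref{K_lower_bound}, whose proof is the delicate step — it is there that the convexity in $t$ of $\overline{\phi}_t/\overline{\psi}$, and hence the linear-interpolation lower bound, is extracted from the positive definiteness of $h_1'(0)-h_2'(0)$.

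I would include a brief remark on the role of this statement, namely that it controls exactly those $(k+1)$-tuples of directions that are all tangent to the slices $X\times\{t\}$ in the proof of part (i) of Theorem \ref{main}: evaluating the right-hand side at $t=-\epsilon$ and at $t=\epsilon$ recovers $\sum_i K_{h_1}(v_0,e^i_0)$ and $\sum_i K_{h_2}(v_0,e^i_0)$, which are bounded below by a fixed positive constant by the $Ric_k>0$ hypotheses on $(M_1,h_1)$ and $(M_2,h_2)$; linear interpolation in $t$ then keeps $\sum_{i=1}^{k}K_g(v_t,e^i_t)$ bounded below by a positive constant for every $t\in[-\epsilon,\epsilon]$, once $\epsilon$ is small enough to absorb the uniform $O(\epsilon)$-error. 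The complementary case of $(k+1)$-tuples involving $\partial_t$ is handled separately using part (ii) of Lemma \ref{K}.
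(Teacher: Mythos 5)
Your proposal is correct and matches the paper's (implicit) argument exactly: the corollary is stated without proof precisely because it follows by applying Proposition \ref{K_lower_bound} to each pair $(v,e^i)$ and summing, with the uniformity of the $O(\epsilon)$-bound guaranteeing that the $k$ error terms combine into a single $O(\epsilon)$. Your closing remark on how the estimate is used in the proof of Theorem \ref{main}(i) is also consistent with the paper.
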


Before proving Theorem \ref{main} we need a definition:
\begin{definition}\label{eta-close}
For $\eta>0$ we will say that a $(k+1)$-frame $\{u_0,u_1,...,u_k\}$ in a Riemannian manifold $(M,\langle\,,\,\rangle)$ is $\eta$-nearly orthonormal if $\langle u_i,u_i\rangle \in [1-\eta,1+\eta]$ for all $i$, and $|\langle u_i,u_j \rangle|\le \eta$ whenever $i \neq j.$
\end{definition}

Suppose that $\{v,e^1,...,e^k\}$ is an orthonormal $(k+1)$-frame tangent to some $X \times \{t\}$ at a point $(x,t)$. Extend this to a frame field along $\{x\} \times [-\epsilon,\epsilon]$ using the product structure in $X \times [-\epsilon,\epsilon]$ in the usual way. The resulting frame field will not in general be orthonormal, however by (i) of Lemma \ref{g_t&g_t'&g_t''} and the compactness of the set of all such frame fields we have:

\begin{observation}\label{obs1}

Given $\epsilon>0$ there is an $\eta=\eta(\epsilon,h_1,h_2)>0$, with $\eta \to 0$ as $\epsilon \to 0,$ such that every $g$-orthonormal frame field tangent to $X \times \{t\}$ is $\eta$-nearly orthonormal for $h_i(0)$.

\end{observation}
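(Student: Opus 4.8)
The plan is to deduce the observation directly from part~(i) of Lemma~\ref{g_t&g_t'&g_t''}, promoted to a uniform estimate. First I would note that a frame field tangent to the slices $X\times\{t\}$ has no $\partial_t$-component, so with respect to $g=dt^2+g_t$ being $g$-orthonormal along $X\times\{t\}$ is the same as being $g_t$-orthonormal there; it therefore suffices to compare the symmetric bilinear forms $g_t$ and $h_i(0)$ on $T_xX$. Note also that $h_1(0)=h_2(0)$, since the boundaries are identified by an isometry, so there is really only one boundary metric $h_i(0)$ to compare against, and it is enough to establish the bound for one value of $i$.

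The key step is then to record a uniform version of Lemma~\ref{g_t&g_t'&g_t''}(i): for each $(x,t)\in X\times[-\epsilon,\epsilon]$ the difference $g_t-h_i(0)$, viewed as a symmetric bilinear form on $T_xX$, has operator norm (measured against the fixed metric $h_i(0)$) at most $C\epsilon$ for a constant $C=C(X,h_1,h_2)$, uniformly in $x$ and in $t\in[-\epsilon,\epsilon]$ once $\epsilon$ is small. This is immediate from the proof of that lemma: every error term there is the product of a scalar that is $O(\epsilon)$ with the pair $(u,v)$ entered linearly in each slot, so the estimate is homogeneous of degree one in each argument, and the bound becomes uniform after invoking compactness of $X$ and of the interval $[-\epsilon,\epsilon]$.

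Granting this, the conclusion follows at once. Given a $g$-orthonormal $(k+1)$-frame $\{v,e^1,\dots,e^k\}$ tangent to some $X\times\{t\}$, relabel it as $\{u_0,\dots,u_k\}$, so that $g_t(u_a,u_b)=\delta_{ab}$. For $\epsilon$ small enough that $C\epsilon<1/2$ the forms $g_t$ and $h_i(0)$ are uniformly $2$-bilipschitz, hence each $u_a$ has $h_i(0)$-norm at most $2$ and so lies in the fixed compact set $\{w: h_i(0)(w,w)\le 4\}$, on which the uniform estimate gives $|h_i(0)(u_a,u_b)-g_t(u_a,u_b)|\le 4C\epsilon$. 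Combined with $g_t(u_a,u_b)=\delta_{ab}$ this yields $|h_i(0)(u_a,u_a)-1|\le 4C\epsilon$ and $|h_i(0)(u_a,u_b)|\le 4C\epsilon$ for $a\ne b$, so that $\eta=\eta(\epsilon,h_1,h_2):=4C\epsilon$ works and visibly tends to $0$ as $\epsilon\to 0$; this is precisely $\eta$-near orthonormality for $h_i(0)$ in the sense of Definition~\ref{eta-close}. There is no genuine obstacle in this argument; the only point requiring a little care is the uniformity of the $O(\epsilon)$ estimate over all admissible frames and all slices $X\times\{t\}$, which is exactly the role of the compactness assertion in the sentence preceding the statement.
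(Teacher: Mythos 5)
Your argument is correct and is essentially the paper's own (the paper merely asserts the observation as a consequence of Lemma \ref{g_t&g_t'&g_t''}(i) together with compactness, which is exactly what you spell out: a uniform $O(\epsilon)$ comparison of $g_t$ with $h_i(0)$ over all slices and all points of $X$, applied to $g_t$-orthonormal vectors). The only nitpick is a harmless constant: a $g_t$-unit vector has $h_i(0)$-norm at most $\sqrt{2}$ rather than $2$ under your bilipschitz bound, but this does not affect the conclusion.
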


We now prove Theorem \ref{main}. We start with case (i).

\begin{proposition}\label{C^1_result}
Given that $h_1$ and $h_2$ both have $Ric_k>0$ and $h_1'(0)-h_2'(0)$ is positive definite, for all $\epsilon$ sufficiently small the $C^1$-interpolating metric $g=dt^2+g_t$ on $X \times [-\epsilon,\epsilon]$ has $Ric_k>0$.
\end{proposition}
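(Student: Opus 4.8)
The plan is to control the $k^{\text{th}}$-intermediate Ricci curvature of $g$ over \emph{all} $(k+1)$-frames, not just those tangent to a slice $X\times\{t\}$, and to leverage the fact that $Ric_k>0$ on the compact manifolds $(M_1,h_1)$ and $(M_2,h_2)$ gives a uniform lower bound $\delta>0$ for the sum $\sum_{i=1}^k K_{h_i}(v,e^i)$ over all orthonormal $(k+1)$-frames. By continuity and the compactness of the (closure of the) relevant frame bundles, this bound persists, degraded only by a controlled amount, for $\eta$-nearly orthonormal frames: there is $\eta_0>0$ so that if $\{v,e^1,\dots,e^k\}$ is $\eta$-nearly orthonormal for $h_i(0)$ with $\eta\le\eta_0$, then $\sum_{i=1}^k K_{h_i}(v,e^i)\ge\delta/2$, say. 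Given a target $\epsilon$, Observation \ref{obs1} supplies $\eta=\eta(\epsilon)\to 0$, so for $\epsilon$ small enough any $g$-orthonormal frame on a slice is $\eta_0$-nearly orthonormal for both $h_1(0)$ and $h_2(0)$.

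First I would treat the frames that are tangent to a slice $X\times\{t\}$. Let $\{v,e^1,\dots,e^k\}$ be such a $g$-orthonormal frame at $(x,t)$. These vectors are linearly independent, so Corollary \ref{Ric_k-interpolation} applies: $\sum_{i=1}^k K_g(v_t,e^i_t)\ge \frac{\epsilon-t}{2\epsilon}\sum_{i=1}^k K_{h_1}(v_0,e^i_0)+\frac{\epsilon+t}{2\epsilon}\sum_{i=1}^k K_{h_2}(v_0,e^i_0)+O(\epsilon)$, with the $O(\epsilon)$ uniform in the frame. Since $\{v_0,\dots,e^k_0\}$ (the corresponding vectors viewed at $t=0$ via the product structure) is $\eta_0$-nearly orthonormal for each $h_i(0)$ by Observation \ref{obs1}, each of the two sums on the right is $\ge\delta/2$, and the convex combination with weights $\frac{\epsilon-t}{2\epsilon},\frac{\epsilon+t}{2\epsilon}\in[0,1]$ summing to $1$ is therefore also $\ge\delta/2$. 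Thus $\sum_i K_g(v_t,e^i_t)\ge \delta/2 - C\epsilon > 0$ once $\epsilon$ is small.

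Next I would handle a general $g$-orthonormal $(k+1)$-frame at a point $(x,t)$, which need not be tangent to the slice. Write each frame vector as a slice-tangent part plus a $\partial_t$-component; by an orthogonal change of basis within the frame (which does not change the value of $\sum_i K_g(v,e^i)$ — this is the standard fact that the $k$-positivity of the bilinear form $w\mapsto \mathrm{Ric}_k$-type sum is basis-independent among orthonormal $k$-tuples in the complementary directions, i.e.\ one reduces to an adapted basis) one may assume at most one of the $k+1$ vectors has a $\partial_t$-component, i.e.\ the frame is $\{v,e^1,\dots,e^k\}$ with $e^1,\dots,e^k$ tangent to the slice and $v=\cos\theta\,\partial_t+\sin\theta\, w$ for some unit slice-tangent $w$ orthogonal to the $e^i$, or the symmetric case with one $e^i$ out of the slice. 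Then $K_g(v,e^i)=\cos^2\theta\,K_g(\partial_t,e^i)+\sin^2\theta\,K_g(w,e^i)+2\sin\theta\cos\theta\,(\text{mixed term})$; Lemma \ref{K}(ii) shows $K_g(\partial_t,e^i)=\frac{1}{2\epsilon}(h_1'(0)-h_2'(0))(e^i,e^i)+O(1)$, which is \emph{bounded below by a positive multiple of $1/\epsilon$} thanks to positive-definiteness of $h_1'(0)-h_2'(0)$ (and $e^i$ being nearly unit), while the slice-tangent terms $K_g(w,e^i)$ are bounded below by the previous paragraph applied to the slice-frame $\{w,e^1,\dots,e^k\}$, and the mixed curvature terms are $O(1/\sqrt\epsilon)$ at worst (one would check this from the curvature formula for a warped-type metric $dt^2+g_t$, as in \cite[Lemma 5]{BWW}). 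Summing over $i$: the total is $\ge \cos^2\theta\cdot\frac{c}{\epsilon} + \sin^2\theta\cdot(\delta/2) - O(1/\sqrt\epsilon)$, which for small $\epsilon$ is positive for every $\theta$ (the $1/\epsilon$ term dominates when $\cos\theta$ is not tiny, and the $\delta/2$ term rescued by $\sin^2\theta\approx 1$ dominates the lower-order error when $\cos\theta$ is tiny — a short case split on whether $\cos^2\theta\ge\sqrt\epsilon$ makes this rigorous). The symmetric case (one $e^i$ leaving the slice) is analogous.

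The main obstacle is the last step: frames not tangent to a slice. One must be careful that (a) the reduction to an adapted frame genuinely preserves the relevant curvature sum — this needs the observation that for a fixed unit $v$, the map $(e^1,\dots,e^k)\mapsto\sum_i K_g(v,e^i)$ depends only on the $k$-plane spanned by the $e^i$ in $v^\perp$, so one may rotate within $v^\perp$ freely, and then separately one must also allow rotating $v$; the cleanest formulation is that $\sum_{i=1}^k K_g(v,e^i) = \mathrm{tr}\big(R(v,\cdot,v,\cdot)|_{P}\big)$ for the $k$-plane $P$ spanned by the $e^i$, and one optimizes the geometry of the pair $(v,P)$ — and (b) the mixed curvature terms coming from the off-diagonal part of the curvature tensor of $dt^2+g_t$ must be shown to be of strictly lower order than $1/\epsilon$; this is where the precise structure of \eqref{eq:g(t)} via Lemma \ref{g_t&g_t'&g_t''} (giving $g_t'=O(1)$, $g_t''=O(1/\epsilon)$) is essential, since the dangerous terms involve $g_t'$ but not $g_t''$ and are hence $O(1)$, harmless against the $1/\epsilon$ gain. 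Modulo these bookkeeping points, which are exactly parallel to the positive-Ricci case in \cite{BWW} but now carried out frame-by-frame rather than trace-wise, the proposition follows.
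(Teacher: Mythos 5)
Your treatment of frames tangent to a slice is correct and coincides with the paper's (Corollary \ref{Ric_k-interpolation} plus Observation \ref{obs1}). The genuine gap is in your reduction of a general $g$-orthonormal frame to an ``adapted'' one. The quantity $\sum_{i=1}^k K_g(v,e^i)=\mathrm{tr}\bigl(R(v,\cdot,\cdot,v)|_P\bigr)$ is invariant under rotations of $e^1,\dots,e^k$ \emph{within} $P=\mathrm{span}(e^1,\dots,e^k)$, but it is \emph{not} invariant under orthonormal changes of basis of the full $(k+1)$-plane that mix $v$ with the $e^i$. Rotating within $P$ lets you arrange that at most one of the $e^i$ has a $\partial_t$-component, but $v$ may independently have one as well (e.g.\ $v=\tfrac{1}{\sqrt2}(\partial_t+X_1)$, $e^1=\tfrac{1}{\sqrt2}(\partial_t-X_1)$ are orthonormal), so the configuration you must control involves two vectors leaving the slice and a genuinely two-parameter family of angles. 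You flag this yourself (``one must also allow rotating $v$''), but rotating $v$ is precisely what changes the sum, so the reduction as stated does not go through and the case analysis built on it is incomplete.

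The paper circumvents this entirely by decomposing \emph{plane by plane} rather than reducing the frame: each $K_g(v,e^i)$ depends only on the $2$-plane $P_i=\mathrm{span}(v,e^i)$, and for each $P_i$ not tangent to the slice one picks a unit $w^i\in P_i\cap T(X\times\{t\})$ and $u^i=\cos(\theta_i)\tilde u^i+\sin(\theta_i)\partial_t$ orthogonal to it, yielding $K(P_i)=\cos^2(\theta_i)K(\tilde u^i,w^i)+\sin^2(\theta_i)K(\partial_t,w^i)+\sin(2\theta_i)R(\tilde u^i,w^i,w^i,\partial_t)$ with the mixed term $O(1)$ (not $O(1/\sqrt\epsilon)$; it involves only $g_t$ and $g_t'$). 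This introduces a second subtlety your argument would also have to face: after re-basing, $\{\tilde u^1,w^1,\dots,\tilde u^k,w^k\}$ is no longer a single orthonormal $(k+1)$-frame, so positivity of $\sum_i K(\tilde u^i,w^i)$ cannot be read off from $Ric_k>0$ directly. The paper handles it by a compactness/continuity argument: when all $\theta_i=0$ one recovers the original frame and the sum is positive, hence there is a uniform $\theta>0$ such that the sum of the $K(P_i)$ stays positive whenever all $\theta_i\le\theta$; and if some $\theta_i>\theta$, the term $\sin^2(\theta_i)\bigl(\tfrac{C}{2\epsilon}+O(1)\bigr)$ coming from positive definiteness of $h_1'(0)-h_2'(0)$ dominates all other (uniformly bounded below) contributions as $\epsilon\to0$. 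To repair your proof you should either adopt this plane-by-plane scheme or carry out the two-angle analysis for the configuration with both $v$ and one $e^i$ non-tangent; as written, the argument does not cover all frames.
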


\begin{proof}

Since $X$ is compact and the sectional curvature $K_{h_i}(u_0,v_0)$ is continuous in $u$ and $v$, there exists $\eta=\eta\bigl(h_i(0)\bigr)>0$ such that every $\eta$-nearly orthonormal $(k+1)$-frame for $(X,h_i(0))$ has $Ric_k>0$ with respect to both $h_1$ and $h_2$. Moreover, by possibly choosing $\eta$ smaller, we can assume that $h_1'(0)-h_2'(0)$ is positive on $\eta$-unit vectors (i.e.\ vectors $v$ with $\lVert v\rVert \in[1-\eta,1+\eta]$) We denote a positive lower bound by $C>0$.
	
By Observation \ref{obs1} there exists $\epsilon_0>0$, such that every $g$-orthonormal frame tangent to $X\times \{t\}\subseteq X\times [-\epsilon,\epsilon]$ is $\eta$-nearly orthonormal for $h_i(0)$ for all $\epsilon<\epsilon_0$. It follows that every orthonormal $(k+1)$-frame for $(X\times\{t\},g_t )$ has $Ric_k>0$ with respect to $h_i$ when considered as a frame tangent to $X\times\{0\}$.

Let $\{v,e^1,\dots,e^k \}$ be a $g$-orthonormal frame. Each $K_g(v,e^i)$ only depends on the plane $P_i$ spanned by $v$ and $e^i$. It will be convenient to re-choose the basis for $P_i$ as follows. In the case where $P_i$ is not tangent to $X \times \{t\}$, observe that $T_{(x,t)}\bigl(X \times [-\epsilon,\epsilon]\bigr) \cap P_i$ is a one-dimensional space. It follows that there exists a $g$-unit vector $w^i$ in this intersection. Let $u^i$ be a $g$-unit vector in $P_i$ which is $g$-orthogonal to $w^i$. By assumption $u^i$ has a non-zero component in the $t$-direction, so we can write $u^i=\cos(\theta_i)\tilde{u}^i+\sin(\theta_i)\partial_t$ for some angle $\theta_i$ and a $g$-unit vector $\tilde{u}^i$ tangent to $X \times \{t\}.$ Notice that $\tilde{u}^i$ is orthogonal to $w^i$. 
Moreover, by rechoosing $\tilde{u}^i$ to be its negative if necessary, we can assume that $\theta_i \in (0,\pi/2].$ If, on the other hand $P_i$ {\it is} tangent to $X \times \{t\}$, set $u^i=\tilde{u}^i=v$ and $w^i=e^i$. In this case we have $\theta_i=0$. In all cases the sectional curvature of $P_i$ with respect to $g$ satisfies
\begin{align*}
	K(P_i)&=K(u^i,w^i) \\
	&=K\bigl(\cos(\theta_i)\tilde{u}^i+\sin(\theta_i)\partial_t,w^i\bigr) \\
	&=R\bigl(\cos(\theta_i)\tilde{u}^i+\sin(\theta_i)\partial_t,w^i,w^i,\cos(\theta_i)\tilde{u}^i+\sin(\theta_i)\partial_t\bigr) \\
	&=\cos^2(\theta_i)R(\tilde{u}^i,w^i,w^i,\tilde{u}^i)+\sin^2(\theta_i)R(\partial_t,w^i,w^i,\partial_t)+\sin(2\theta_i)R(\tilde{u}^i,w^i,w^i,\partial_t) \\
	&=\cos^2(\theta_i)K(\tilde{u}^i,w^i)+\sin^2(\theta_i)K(\partial_t,w^i)+\sin(2\theta_i)R(\tilde{u}^i,w^i,w^i,\partial_t). \\
\end{align*}
In \cite[\S2]{BWW} it is shown that the term $R(\tilde{u}^i,w_i,w_i,\partial_t)$ appearing above is bounded independent of $\epsilon$ as $\epsilon \to 0$, assuming the vectors $\tilde{u}^i,w_i,\partial_t$ are fixed. Since the pair $\{\tilde{u}^i,w_i\}$ is $g$-orthonormal, as $\epsilon \le \epsilon_0$ we have that  $\{\tilde{u}^i,w_i\}$ is $\eta$-nearly orthonormal for $h_i(0).$ Clearly, by compactness of the set of choices, $\{R(v,y,y,\partial_t)\,|\,\{v,y\}\, \text{ is } \eta\text{-nearly orthonormal for }h_i(0)\}$ is bounded independent of $\epsilon$.

	Hence, using Lemma \ref{K} and Proposition \ref{K_lower_bound}, we can now estimate:
	\begin{align*}
		K(P_i)\geq & \cos^2(\theta_i)\left( \frac{\epsilon-t}{2\epsilon}K_{h_1}(\tilde{u}^i_0,w^i_0)+\frac{\epsilon+t}{2\epsilon}K_{h_2}(\tilde{u}^i_0,w^i_0)+O(\epsilon) \right)\\
		&+\sin^2(\theta_i)\left( \frac{1}{2\epsilon}(h_1'(0)-h_2'(0))(w^i,w^i)+O(1) \right)\\
		&+\sin(2\theta_i)O(1)
	\end{align*}
	as $\epsilon\to 0$.	Since each sum $\sum_i K_{h_j}(\tilde{u}^i_0,w^i_0)$ for $j=1,2$ is positive whenever all $\theta_i$ vanish (as in this case we have $\tilde{u}^i=v$ and $w^i=e^i$) and since all terms are uniformly bounded from below, it follows from continuity that there exists $\theta>0$ such that $\sum_{i} K(P_i)$ is positive whenever all $\theta_i$ are contained in the interval $[0,\theta]$ for all $\epsilon\leq \epsilon_0$.
	
	Finally, suppose there exists $\theta_i$ with $\theta_i>\theta$. We estimate the second term from below by
	\[ \sin^2(\theta)\left(\frac{1}{2\epsilon}C+O(1)  \right). \]
	This expression tends to infinity as $\epsilon\to 0$. Hence, by again using the fact that all terms are uniformly bounded from below, it follows that the sum $\sum_{i}K(P_i)$ is positive for all $\epsilon\leq\epsilon_0$ by possibly choosing $\epsilon_0$ smaller.
\end{proof}

Since the condition on $h_1'(0)-h_2'(0)$ is precisely the condition on the second fundamental form in part (i) of Theorem \ref{main} (cf.\ Lemma \ref{II}), this finishes part (i) of Theorem \ref{main} in the $C^1$-setting.

\bigskip

For the proof of part (ii) of Theorem \ref{main} we first make the following observation.
\begin{lemma}\label{Ricci_estimates}
	For the Ricci curvatures of $g$ we have the following:
	\begin{enumerate}
		\item $Ric(u_t,v_t)=\frac{1}{2\epsilon}(h_1'(0)-h_2'(0))(u,v)+O(1)$ as $\epsilon\to 0$ for any $(u,v)$ tangent to $X$,
		\item $Ric(u_t,\partial_t)=O(1)$ as $\epsilon\to 0$ for any $u$ tangent to $X$,
		\item $Ric(\partial_t,\partial_t)=\frac{1}{2\epsilon}\sum_{i=1}^{n-1}(h_1'(0)-h_2'(0))(e^i,e^i)+O(1)$ as $\epsilon\to 0$, where $(e^i)$ is any orthonormal basis in $TX$.
	\end{enumerate}
\end{lemma}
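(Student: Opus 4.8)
The plan is to work at a fixed point $(x,t)\in X\times[-\epsilon,\epsilon]$ with the $g$-orthonormal frame $\{\partial_t,e^1,\dots,e^{n-1}\}$ adapted to the slice $X\times\{t\}$, where $\{e^1,\dots,e^{n-1}\}$ is a $g_t$-orthonormal basis of $T_xX$ regarded, via the product structure, as tangent to $X\times\{t\}$, and to read off each Ricci component as a sum of sectional curvatures governed by Lemma \ref{K}. The structural fact I would rely on throughout is that the full curvature tensor of a metric $dt^2+g_t$ is a universal algebraic expression in $g_t$, $g_t'$, $g_t''$ and the $X$-direction derivatives of $g_t$ and of $g_t'$ up to second order, and that among all of these quantities only $g_t''$ fails to be $O(1)$ as $\epsilon\to0$: indeed $g_t=h_i(0)+O(\epsilon)$ and $g_t'$ stays bounded with bounded $X$-derivatives by Lemma \ref{g_t&g_t'&g_t''}(i),(ii) (these estimates surviving $X$-differentiation, as noted in the proof of Lemma \ref{K}(i)), whereas $g_t''=\tfrac1{2\epsilon}(h_2'(0)-h_1'(0))+O(1)$ by Lemma \ref{g_t&g_t'&g_t''}(iii). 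So the only source of a $1/\epsilon$-term is $g_t''$, equivalently the radial sectional curvatures $K(u_t,\partial_t)$ of Lemma \ref{K}(ii).

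For part (iii) I would take $E_0=\partial_t$ in $Ric(\partial_t,\partial_t)=\sum_{i=1}^{n-1}K(\partial_t,e^i)$ and apply Lemma \ref{K}(ii) termwise, obtaining $Ric(\partial_t,\partial_t)=\tfrac1{2\epsilon}\sum_i(h_1'(0)-h_2'(0))(e^i,e^i)+O(1)$; since replacing the $g_t$-orthonormal basis $\{e^i\}$ by an $h_i(0)$-orthonormal one changes the trace-like sum $\sum_i(h_1'(0)-h_2'(0))(e^i,e^i)$ only by $O(\epsilon)$ (because $g_t=h_i(0)+O(\epsilon)$), multiplication by $\tfrac1{2\epsilon}$ leaves an $O(1)$ error, and the formula holds for any orthonormal basis of $TX$. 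For part (i), by homogeneity and polarization it suffices to estimate $Ric(u_t,u_t)$ for $g_t$-unit $u$; extending $u=:e^1$ to a $g_t$-orthonormal basis of the slice gives $Ric(u_t,u_t)=K(\partial_t,e^1)+\sum_{j=2}^{n-1}K(e^1,e^j)$. The first term is $\tfrac1{2\epsilon}(h_1'(0)-h_2'(0))(u,u)+O(1)$ by Lemma \ref{K}(ii). For $j\ge2$, Lemma \ref{K}(i) gives $K(e^1,e^j)=K_t(u,e^j)-\phi_t(u,e^j)/\psi_t(u,e^j)$ with $K_t(u,e^j)=K_{h_i(0)}(u,e^j)+O(\epsilon)=O(1)$, while $\psi_t(u,e^j)=4$ (as $\{e^1,e^j\}$ is $g_t$-orthonormal) and $\phi_t(u,e^j)=g_t'(u,u)g_t'(e^j,e^j)-g_t'(u,e^j)^2=O(1)$ since $g_t'$ is bounded; hence the sum is $O(1)$ and $Ric(u_t,u_t)=\tfrac1{2\epsilon}(h_1'(0)-h_2'(0))(u,u)+O(1)$, which extends to all $u,v$ by homogeneity and polarization, the $O(1)$ being uniform over a neighbourhood of the $g_t$-unit sphere.

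For part (ii) the slickest route is the Codazzi equation for the foliation by the hypersurfaces $X\times\{t\}$: one has $Ric(\partial_t,u_t)=\sum_{j=1}^{n-1}R(e^j,\partial_t,u_t,e^j)$, the $\partial_t$-term in the trace vanishing as $R(\partial_t,\partial_t)=0$, and by the symmetries of the curvature tensor each summand equals, up to sign, a difference of components of the covariant derivative $\nabla^{g_t}\II_t$ of the second fundamental form of the slice. Since $\II_t=-\tfrac12g_t'$ by Lemma \ref{II}, this is built only from $g_t$, $g_t'$ and their first $X$-direction derivatives, hence is $O(1)$ by Lemma \ref{g_t&g_t'&g_t''}; alternatively, one checks directly from the Christoffel symbols of $dt^2+g_t$ that the mixed Ricci components contain no $g_t''$. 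The main obstacle is not any single computation but making the bookkeeping airtight — verifying that, apart from the genuinely divergent radial curvatures $K(\partial_t,\cdot)$, every ingredient (the intrinsic curvatures $K_t$ of the slices, the Gauss correction $\phi_t/\psi_t$, the entire Codazzi term) is manufactured from $g_t$, $g_t'$ and their $X$-derivatives and is therefore bounded, and that all the underlying $O(\epsilon)$/$O(1)$ estimates are uniform over the relevant compact sets of vectors and $2$-planes, so that the homogeneity/polarization step in (i) and the change-of-basis step in (iii) are legitimate; this uniformity is of the same kind already exploited in Observation \ref{obs1} and in the proof of Proposition \ref{K_lower_bound}.
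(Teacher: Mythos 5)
Your proposal is correct and follows essentially the same route as the paper's proof: each Ricci component is decomposed into curvature-tensor contributions, the only $\frac{1}{\epsilon}$-divergence is traced to $g_t''$ via Lemma \ref{K}(ii) and Lemma \ref{g_t&g_t'&g_t''}(iii), and every remaining ingredient (intrinsic slice curvatures, the Gauss correction $\phi_t/\psi_t$, and the mixed components $\langle R(u,v)w,\partial_t\rangle$) is manufactured from $g_t$, $g_t'$ and their $X$-derivatives and is therefore $O(1)$. The paper's version is terser --- it cites \cite{BWW} for the boundedness of the mixed components rather than invoking the Codazzi equation, and omits the polarization and change-of-basis remarks --- but the substance is identical.
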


\begin{proof}
Statement (iii) follows directly from Lemma \ref{K}. In a similar fashion, by Lemma \ref{K}, $Ric(u_t,v_t)$ is given by the sum of $\frac{1}{2\epsilon}(h_1'(0)-h_2'(0))(u,v)+O(1)$ and a term only involving $g_t$ and $g_t'$, which, by Lemma \ref{g_t&g_t'&g_t''} is uniformly bounded, showing (i). For (ii) we note that curvature expressions of the form $\langle R(u_t,v_t)w_t,\partial_t\rangle$ only involve $g_t$ and $g_t'$, see \cite[Proof of Lemma 6]{BWW}, and are therefore uniformly bounded by Lemma \ref{g_t&g_t'&g_t''}.
\end{proof}

\begin{proposition}\label{C^1_Sc_k}
	Suppose that $h_1$ and $h_2$ have $Sc_k>0$ and that $h_1'(0)-h_2'(0)$ is $k$-positive for $1\leq k\leq n-2$ and $(k-1)$-positive for $k=n-1,n$. Then for $\epsilon$ sufficiently small $g$ has $Sc_k>0$ on $X\times[-\epsilon,\epsilon]$.
\end{proposition}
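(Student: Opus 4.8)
The plan is to deduce everything from Lemma~\ref{Ricci_estimates}, which says that in every direction the Ricci curvature of $g$ equals $\tfrac1{2\epsilon}$ times a bilinear expression in the fixed symmetric $2$-tensor $A:=h_1'(0)-h_2'(0)$ on $X$, plus an error that is $O(1)$ uniformly in the point of $X$, in $t\in[-\epsilon,\epsilon]$, and in the chosen (uniformly bounded) vectors. Recall that $g$ has $Sc_k>0$ precisely when $\sum_{j=1}^k Ric_g(w^j,w^j)>0$ at every point and for every $g$-orthonormal $k$-frame $\{w^1,\dots,w^k\}$, and that this sum depends only on the $k$-plane $W=\mathrm{span}\{w^1,\dots,w^k\}$. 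So it suffices to produce, for $\epsilon$ small, a positive lower bound for this sum, uniform over all $k$-planes and all points. Two preliminary remarks: first, the case $k=n$ is immediate, since then $\sum_{j=1}^n Ric_g(w^j,w^j)=scal_g=\tfrac1\epsilon\,\mathrm{tr}\,A+O(1)$ by parts (i) and (iii) of Lemma~\ref{Ricci_estimates}, and $\mathrm{tr}\,A>0$ because $A$ is $(n-1)$-positive; second, an $m$-positive symmetric bilinear form is automatically $(m+1)$-positive (if $\mu_1\le\cdots\le\mu_d$ and $\mu_1+\cdots+\mu_m>0$, then $m\mu_m>0$, so $\mu_{m+1}\ge\mu_m>0$), so the hypothesis ``$A$ is $k$-positive'' for $k\le n-2$ implies $A$ is $(n-2)$- and $(n-1)$-positive, and the hypothesis ``$A$ is $(n-2)$-positive'' for $k=n-1$ implies $A$ is $(n-1)$-positive.

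Now fix $1\le k\le n-1$ and a $k$-plane $W$ at a point $(x,t)$, and decompose $W$ with respect to the $g$-orthogonal splitting $T_{(x,t)}\bigl(X\times[-\epsilon,\epsilon]\bigr)=T_x\bigl(X\times\{t\}\bigr)\oplus\R\partial_t$. If $W\subseteq T_x(X\times\{t\})$, choose a $g$-orthonormal basis $w^1,\dots,w^k$ of $W$ tangent to the slice and set $\theta=0$, $\tilde w^k:=w^k$. Otherwise $W\cap T_x(X\times\{t\})$ is $(k-1)$-dimensional (this uses $k\le n-1$); choose a $g$-orthonormal basis $w^1,\dots,w^{k-1}$ of it and complete to a $g$-orthonormal basis of $W$ by a vector $w^k=\cos\theta\,\tilde w^k+\sin\theta\,\partial_t$, where $\theta\in(0,\pi/2]$ and $\tilde w^k$ is a $g$-unit vector tangent to the slice and $g$-orthogonal to $w^1,\dots,w^{k-1}$. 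In both cases $\tilde w^1:=w^1,\dots,\tilde w^{k-1}:=w^{k-1},\tilde w^k$ are $g_t$-orthonormal and tangent to the slice, hence $\eta$-nearly orthonormal for $h_i(0)$ by Observation~\ref{obs1}, with $\eta=\eta(\epsilon)\to0$ as $\epsilon\to0$.

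Expanding $Ric_g(w^k,w^k)$ bilinearly, inserting the three estimates of Lemma~\ref{Ricci_estimates} (the cross term $Ric_g(\tilde w^k,\partial_t)$ is $O(1)$), and using $\cos^2\theta=1-\sin^2\theta$, one obtains
\[
\sum_{j=1}^{k}Ric_g(w^j,w^j)=\frac{1}{2\epsilon}\left(\Sigma+\sin^2\theta\,\bigl(\mathrm{tr}\,A-A(\tilde w^k,\tilde w^k)\bigr)\right)+O(1),
\]
where $\Sigma:=\sum_{j=1}^{k}A(\tilde w^j,\tilde w^j)$, $\mathrm{tr}$ is taken with respect to $h_i(0)$, and the $O(1)$ is uniform (replacing $\mathrm{tr}_{g_t}$ by $\mathrm{tr}_{h_i(0)}$ in part (iii) of Lemma~\ref{Ricci_estimates} only changes the expression by $O(1)$). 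It remains to bound $\Sigma$ and $\mathrm{tr}\,A-A(\tilde w^k,\tilde w^k)$ below by positive constants for $\epsilon$, hence $\eta$, small. Since the $\tilde w^j$ form a $g_t$-orthonormal $k$-frame in the slice and $A$ is $k$-positive (for $k\le n-2$ by hypothesis; for $k=n-1$ this reads $\mathrm{tr}\,A>0$, established above), compactness of $X$ and of the space of $\eta$-nearly orthonormal $k$-frames gives $\Sigma\ge c_1>0$ for $\eta$ small. Since $A$ is $(n-2)$-positive, its largest eigenvalue $\lambda_{\max}(A)$ (relative to $h_i(0)$) is positive, so $A(\tilde w^k,\tilde w^k)\le\lambda_{\max}(A)+O(\eta)$; as $\mathrm{tr}\,A-\lambda_{\max}(A)$ is the sum of the $n-2$ smallest eigenvalues of $A$, which is positive, we get $\mathrm{tr}\,A-A(\tilde w^k,\tilde w^k)\ge c_2>0$ for $\eta$ small. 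Hence the bracket is $\ge c_1+\sin^2\theta\,c_2\ge c_1>0$, so $\sum_{j=1}^k Ric_g(w^j,w^j)\ge\frac{c_1}{2\epsilon}+O(1)$, which is positive once $\epsilon$ is small enough; since $c_1$ and the $O(1)$ bound are independent of $W$ and of the point, a single choice of $\epsilon$ works.

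The step I expect to be the main (if not deep) difficulty is this last piece of eigenvalue bookkeeping: one needs $k$-positivity of $A$ to control $\Sigma$ and $(n-2)$-positivity of $A$ to control $\mathrm{tr}\,A-A(\tilde w^k,\tilde w^k)$, and this is exactly what dictates the case distinction $k\le n-2$ versus $k=n-1,n$ in the hypotheses. The only other point needing attention is the uniformity of all the $O(1)$ and $O(\eta)$ bounds (over frames, over $x\in X$, and over $t\in[-\epsilon,\epsilon]$), which is what allows $\epsilon$ to be chosen once and for all. Note that, unlike in Proposition~\ref{C^1_result}, the hypothesis $Sc_k>0$ on $h_1,h_2$ is not actually used here: the positivity of $A$ makes the $\tfrac1{2\epsilon}$-term dominate the $O(1)$ error outright.
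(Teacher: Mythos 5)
Your proof is correct and takes essentially the same approach as the paper: both arguments rest entirely on Lemma \ref{Ricci_estimates} (so that $2\epsilon\, Ric_g$ converges uniformly to $A\oplus\operatorname{tr}A$ with $A=h_1'(0)-h_2'(0)$) together with the same $k\le n-2$ versus $k=n-1,n$ bookkeeping, and, like the paper's proof, yours never uses the $Sc_k>0$ hypothesis on $h_1,h_2$ inside the interpolation region. The only difference is the final linear-algebra step: the paper diagonalizes $Ric$ on the slice and applies the sum-of-the-$k$-smallest-eigenvalues characterization of $k$-positivity, whereas you decompose an arbitrary $k$-plane against the slice by hand; both correctly isolate the need for $(n-2)$-positivity of $A$, which is what forces the weakened $(k-1)$-positivity hypothesis when $k=n-1,n$.
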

\begin{proof}
	We fix a point $x\in X$ and choose an orthonormal basis $\{e^i\}$ of $T_{(x,t)}(X\times\{t\})$ for which $Ric$ is diagonal. Then, by Lemma \ref{Ricci_estimates}, $2\epsilon Ric$ is given in the basis $\{\partial_t,e^1,\dots,e^{n-1}\}$ by the matrix.
	\[
		\begin{pmatrix}
			\sum_{i=1}^{n-1}(h_1'(0)-h_2'(0))(e^i,e^i) & 0 & \cdots & 0\\
			0 & (h_1'(0)-h_2'(0))(e^1,e^1) &  & 0\\
			\vdots &  & \ddots & \\
			0 & 0 &  & (h_1'(0)-h_2'(0))(e^{n-1},e^{n-1})
		\end{pmatrix}+O(\epsilon).
	\]
	Hence, the eigenvalues of $2\epsilon Ric$ converge to \[\left(\sum_{i=1}^{n-1}(h_1'(0)-h_2'(0))(e^i,e^i),(h_1'(0)-h_2'(0))(e^1,e^1),\dots, (h_1'(0)-h_2'(0))(e^{n-1},e^{n-1})\right)  \]
	as $\epsilon\to 0$. Thus, after possibly reordering the vectors $\{e^i\}$, the sum of the $k$ smallest eigenvalues converges either to
	\[ \sum_{i=1}^{k}(h_1'(0)-h_2'(0))(e^i,e^i) \]
	or to
	\[ \sum_{i=1}^{n-1}(h_1'(0)-h_2'(0))(e^i,e^i)+\sum_{i=1}^{k-1}(h_1'(0)-h_2'(0))(e^i,e^i). \]
	By Observation \ref{obs1} and the $k$-positive assumption, it follows directly that the first expression is positive for all $\epsilon$ sufficiently small. In the second case, if $k\leq n-2$, we can rewrite the expression as
	\[ \sum_{\substack{i=1\\i\neq k}}^{n-1}(h_1'(0)-h_2'(0))(e^i,e^i)+\sum_{i=1}^{k}(h_1'(0)-h_2'(0))(e^i,e^i), \]
	which is again positive by the $k$-positive assumption and Observation \ref{obs1}. Finally, for $k=n-1,n$ observe that both terms have at least $k-1$ summands.
	
	Since the condition of $k$-positivity is equivalent to the condition that the sum of the $k$ smallest eigenvalues is positive, see e.g.\ \cite[Lemma 1.1]{Sha1}, the claim follows.
\end{proof}

This concludes the proof of Theorem \ref{main} in the $C^1$-setting.

\bigskip\bigskip

\subsection{The $C^\infty$-smoothing}\label{C_infty}

In \cite{RW1}, the authors established the following lemma using mollification techniques:
\begin{lemma}[\cite{RW1}, Lemma 3.1]\label{smoothing}
Consider the following function: $$h(x):=
\begin{cases}
f(x) \quad x\le 0 \\
g(x) \quad x > 0, \\
\end{cases}
$$ and assume that $f,g$ are smooth functions on $\R$ such that $h(x)$ is $C^1$ at $x=0$. Then $h$ can be smoothed in an arbitrarily small neighbourhood $[-\nu,\nu]$ of $x=0$ in such a way that given any $\mu>0$, $h$ and its smoothing are $\mu$-close in a $C^1$ sense, and the second derivative of the smoothed function on $[-\nu,\nu]$ lies in the interval between $\min\{f''(-\nu),g''(\nu)\}-\mu$ and $\max\{f''(-\nu),g''(\nu)\}+\mu.$
\end{lemma}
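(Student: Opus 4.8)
The plan is to produce the smoothing by grafting a mollification of $h$ into $h$ at the level of the first derivative, and then to read off the second derivative of the result as a convex combination of values of $f''$ and $g''$ plus error terms that vanish with the mollification scale. First, using the continuity of $f''$ and $g''$, I would take $\nu$ small enough (legitimately, since the neighbourhood $[-\nu,\nu]$ may be taken arbitrarily small) that $|f''(x)-f''(-\nu)|<\mu/4$ on $[-\nu,0]$ and $|g''(x)-g''(\nu)|<\mu/4$ on $[0,\nu]$; then every value of $h''$ on $[-\nu,\nu]\setminus\{0\}$ lies in the interval $I:=[\min\{f''(-\nu),g''(\nu)\}-\mu/4,\ \max\{f''(-\nu),g''(\nu)\}+\mu/4]$.

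Next, fixing a standard mollifier $\rho$ (smooth, $\ge 0$, supported in $[-1,1]$, with $\int\rho=1$) and writing $\rho_\delta(x)=\delta^{-1}\rho(x/\delta)$, the convolution $h_\delta:=h*\rho_\delta$ is smooth on $\R$, and since $h$ is $C^1$ one has $h_\delta\to h$ and $h_\delta'\to h'$ uniformly on $[-\nu,\nu]$ as $\delta\to 0$. The key observation is that $h_\delta''(x)=\int h''(x-y)\,\rho_\delta(y)\,dy$ is, for each $x$, a weighted average with non-negative weights summing to $1$ of values of $h''$ on $[x-\delta,x+\delta]$; using continuity of $f''$ at $-\nu$ and of $g''$ at $\nu$ to absorb the slight overshoot past $\pm\nu$, it follows that for $\delta$ small $h_\delta''$ takes values in $I':=[\min\{f''(-\nu),g''(\nu)\}-\mu/2,\ \max\{f''(-\nu),g''(\nu)\}+\mu/2]$ throughout $[-\nu,\nu]$.

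Since $h_\delta$ differs from $h$ everywhere, I would graft it in. Pick a smooth cut-off $\beta\colon\R\to[0,1]$ with $\beta\equiv1$ on $[-\nu/2,\nu/2]$ and $\beta\equiv0$ outside $[-3\nu/4,3\nu/4]$, and set $q:=\beta h_\delta'+(1-\beta)h'$. Because $f$ and $g$ are smooth, $q$ is smooth on all of $\R$ (it is $h_\delta'$ near $0$ and $f'$ or $g'$ away from $0$), it coincides with $h'$ outside $[-3\nu/4,3\nu/4]$, and $|q-h'|\le|h_\delta'-h'|\to0$. Its antiderivative from $-\nu$ need not meet $h$ on the right, the discrepancy being $E:=\int_{-\nu}^{\nu}(q-h')$, with $E\to0$ as $\delta\to0$; to cancel it I would subtract from $q$ the function $\gamma:=E\,\gamma_0$, where $\gamma_0$ is a fixed bump supported in $(-\nu,-3\nu/4)$ with $\int\gamma_0=1$, noting that $\|\gamma\|_{C^1}=|E|\,\|\gamma_0\|_{C^1}\to0$ since $\nu$ is now fixed. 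The smoothing is then $\tilde h(x):=h(-\nu)+\int_{-\nu}^{x}(q-\gamma)(s)\,ds$.

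Finally I would check the three conclusions. $\tilde h'=q-\gamma$ is smooth, so $\tilde h\in C^\infty$; it equals $h$ for $x\le-\nu$ because $q-\gamma=h'$ there, and for $x\ge\nu$ because $\int_{-\nu}^{\nu}(q-\gamma-h')=E-\int\gamma=0$; and $|\tilde h'-h'|$ and $|\tilde h-h|$ are of order $|q-h'|+|\gamma|$, hence $<\mu$ for $\delta$ small. For the second derivative, $\tilde h''=\beta'(h_\delta'-h')+\bigl(\beta h_\delta''+(1-\beta)h''\bigr)-\gamma'$ (near $0$ the $(1-\beta)h''$ summand drops out, so the non-smoothness of $h$ at $0$ causes no trouble): the bracketed term is a convex combination of points of $I'$, since $h''\in I\subset I'$ on $[-\nu,\nu]$ and $h_\delta''\in I'$, hence it lies in $I'$, while $|\beta'(h_\delta'-h')|\to0$ and $|\gamma'|\to0$; taking $\delta$ small enough that these two errors are each below $\mu/4$ places $\tilde h''$ on $[-\nu,\nu]$ inside $[\min\{f''(-\nu),g''(\nu)\}-\mu,\ \max\{f''(-\nu),g''(\nu)\}+\mu]$, as required. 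The genuinely delicate point is exactly this last computation: plain mollification already has the desired averaged second derivative, but reinserting it into $h$ unavoidably creates the term $\beta'(h_\delta'-h')$, and restoring exact agreement with $h$ outside $[-\nu,\nu]$ creates $\gamma'$; the argument succeeds because both of these are controlled by the $C^1$-defect of the mollification (respectively by $|E|$) and so disappear as $\delta\to0$, leaving only the convex-combination term, which is automatically in range.
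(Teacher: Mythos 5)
Your proof is correct. Note that this paper contains no proof of the lemma to compare against---it is imported verbatim from \cite{RW1}, where it is described as being established ``using mollification techniques''---and your argument is a complete proof in exactly that spirit: the preliminary shrinking of $\nu$ is legitimate (and in fact necessary, since the conclusion compares $\tilde h''$ only to the endpoint values $f''(-\nu)$, $g''(\nu)$), the identity $h_\delta''=h''*\rho_\delta$ is valid because $h'$ is Lipschitz with a.e.\ derivative $h''$, and the two error terms $\beta'(h_\delta'-h')$ and $\gamma'$ created by localizing the mollification are correctly isolated and shown to vanish as $\delta\to 0$, leaving only the convex combination $\beta h_\delta''+(1-\beta)h''$ of values in the target interval.
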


Writing the $C^1$ metric $g$ with respect to coordinates $(x_1,...,x_{n-1},t)$ in $X \times [-\epsilon-\iota,\epsilon+\iota]$ and applying Lemma \ref{smoothing} to each of the resulting metric component functions, we immediately conclude:

\begin{corollary}\label{smooth}
	Given that the metrics $h_1,h_2$ have $Ric_k>0$, (respectively $Sc_k>0$), for some $1 \le k \le n-1$, (respectively $1 \le k \le n$), the $C^1$-interpolation metric $g$ can be smoothed in some very small $\nu$-neighbourhood of $t=\pm\epsilon$ so that the resulting $C^\infty$ metric also satisfies $Ric_k>0$, (respectively $Sc_k>0$).
\end{corollary}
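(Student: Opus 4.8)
The plan is to localize the problem to the two $C^1$-joins at $t=\pm\epsilon$ and apply Lemma~\ref{smoothing} to the components $g_{ab}$ of $g$ in the coordinates $(x_1,\dots,x_{n-1},t)$, using one common mollifier $\rho_\nu$ in the $t$-variable for all components at a given join. Outside the $\nu$-neighbourhoods of $t=\pm\epsilon$ the metric is unchanged, so there the curvature condition is inherited from the $C^1$-metric $g$ --- which has $Ric_k>0$ (resp.\ $Sc_k>0$) on $X\times[-\epsilon,\epsilon]$ by Proposition~\ref{C^1_result} (resp.\ Proposition~\ref{C^1_Sc_k}) and agrees with $h_1,h_2$ on the outer collars. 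So the only thing left to verify is the condition on the two transition zones, and the two are handled identically; fix the one at $t=-\epsilon$.

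The structural fact I would use is that, in local coordinates, the Riemann tensor depends \emph{affinely} on the second derivatives of the metric, with coefficients that are continuous in the $0$- and $1$-jets. Hence, for a fixed $0$- and $1$-jet, a fixed plane's sectional curvature, the sums $\sum_{i=1}^k K(v,e^i)$ over a fixed frame, and the sum of the $k$ smallest eigenvalues of $Ric$ are all affine (the last being concave) in the block of second derivatives; consequently both conditions $Ric_k>0$ and $Sc_k>0$ are positivity of a \emph{concave} function of that block (an infimum over the compact set of frames in the first case). In particular, if the condition holds at two $2$-jets that agree in their $0$- and $1$-jet parts, it holds --- with margin no worse than at the endpoints --- at every $2$-jet on the segment joining them.

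Next I would observe that at $t=-\epsilon$ the only discontinuous part of the $2$-jet of $g$ is $\partial_t^2 g_{ab}$: the values of $g_{ab}$, of all first derivatives, and of $\partial_{x_i}\partial_{x_j}g_{ab}$ and $\partial_{x_i}\partial_t g_{ab}$ match from the two sides, because $g$ and $\partial_t g=g_t'$ agree there as tensors on $X$ (the $C^1$-matching), hence so do all their $X$-derivatives. Let $A=\lim_{s\uparrow-\epsilon}\partial_t^2 g_s$ and $B=\lim_{s\downarrow-\epsilon}\partial_t^2 g_s$; these are the $\partial_t^2$-blocks of the $2$-jets of $h_1$ and of the spline metric \eqref{eq:g(t)} at $t=-\epsilon$, both of these $2$-jets satisfy the curvature condition --- $h_1$ by hypothesis, the spline metric by Proposition~\ref{C^1_result} (resp.\ \ref{C^1_Sc_k}) --- with a uniform positive margin over the compact hypersurface $X$, and by the previous paragraph they agree in their $0$- and $1$-jet parts. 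Applying Lemma~\ref{smoothing} componentwise with the common mollifier, $\tilde g$ is $C^1$-close to $g$ and $\partial_t^2\tilde g_{ab}=\rho_\nu*_t(\partial_t^2 g_{ab})$, which as $\nu\to0$ converges uniformly to $\lambda(t)A_{ab}+(1-\lambda(t))B_{ab}$ for a \emph{single} $\lambda(t)\in[0,1]$, while the remaining second derivatives converge uniformly to their continuous values at $-\epsilon$. Thus in the transition zone the $2$-jet of $\tilde g$ is, up to an error tending to $0$ with $\nu$ and $\mu$, the convex combination $\lambda(t)J^-+(1-\lambda(t))J^+$ of the two one-sided $2$-jets $J^\pm$ of $g$ at $t=-\epsilon$; by the concavity observation this combination has $Ric_k>0$ (resp.\ $Sc_k>0$) with uniform positive margin, and by openness of the curvature condition in the $C^2$-sense (using compactness of $X\times[-\epsilon-\iota,\epsilon+\iota]$) so does $\tilde g$ once $\nu,\mu$ are small. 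Repeating at $t=\epsilon$ finishes the proof.

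The hard part is that the jump of $\partial_t^2 g$ across the joins has fixed size of order $1/\epsilon$, hence is \emph{not} small (cf.\ Lemma~\ref{g_t&g_t'&g_t''}(iii)), so naive $C^2$-stability of the curvature condition is useless on its own; what rescues the argument is that this jump is confined to one block of the $2$-jet in which the curvature is affine, so the smoothed $2$-jet stays on the \emph{segment} between two admissible $2$-jets and never enters the larger box of mixed intermediate values --- which is precisely why a single common mollifier must be used across all metric components. A minor point to get right is the continuity of $\partial_{x_i}\partial_t g_{ab}$ at the joins, so that $\partial_t^2$ really is the only "bad" direction of the $2$-jet.
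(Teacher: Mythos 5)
Your proof is correct and follows the same route as the paper: write $g$ in coordinates $(x_1,\dots,x_{n-1},t)$ and smooth each metric component near $t=\pm\epsilon$ using Lemma \ref{smoothing}. The paper asserts that the conclusion is then immediate; you supply the justification it leaves implicit, and your two observations are exactly the right ones. First, since the jump in $\partial_t^2 g$ across each join has size of order $1/\epsilon$ (cf.\ Lemma \ref{g_t&g_t'&g_t''}(iii)), naive $C^2$-stability is unavailable, and the per-component interval bound of Lemma \ref{smoothing}, applied literally and independently to each component, only confines the smoothed $\partial_t^2$-block to a box whose corners need not satisfy the curvature condition. Second, your fix --- use one fixed mollifier in $t$ for all components, so that the smoothed second-derivative block is, up to an error vanishing as $\nu\to 0$ (for fixed $\epsilon$), the convex combination $\lambda(t)A+(1-\lambda(t))B$ of the two one-sided blocks, and then invoke the affine dependence of the curvature tensor on $\partial_t^2 g$ for fixed $0$- and $1$-jet together with the concavity of the $Ric_k>0$ and $Sc_k>0$ conditions in that block --- is precisely what makes the appeal to the mollification techniques of \cite[\S3]{RW1} work. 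The auxiliary points you verify (continuity of $\partial_{x_i}\partial_t g_{ab}$ and $\partial_{x_i}\partial_{x_j}g_{ab}$ at $t=\pm\epsilon$ coming from the $C^1$-matching of $g_t$ and $g_t'$ as tensors on $X$; a uniform positive margin at both one-sided $2$-jets from compactness of $X$ together with Propositions \ref{C^1_result} and \ref{C^1_Sc_k} on one side and the hypotheses on $h_1,h_2$ on the other) are also needed and correct.
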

\medskip

Theorem \ref{main} now follows from Propositions \ref{C^1_result} and \ref{C^1_Sc_k}, which show that for $\epsilon>0$ sufficiently small the $C^1$-interpolation metric $g$ has $Ric_k>0$, respectively $Sc_k>0$, combined with Corollary \ref{smooth}, which shows that the subsequent smoothing of $g$ also retains $Ric_k>0$, respectively $Sc_k>0$.

\subsection{Proof of Corollary \ref{almost-nonnegative}}\label{almost-nonnegative_subsec}

First we give the precise definition of the almost-non-negative curvature conditions we consider in Corollary \ref{almost-nonnegative}.
\begin{definition}\label{D:almost_non-negative}
	\begin{enumerate}
		\item A manifold $M$ has Riemannian metrics of \emph{almost non-negative $k$-th intermediate Ricci curvature}, if for all $\epsilon>0$ there exists a Riemannian metric $g$ on $M$ such that for all orthonormal $(k+1)$-frames $(v,e^1,\dots,e^k)$ we have
		\[ \sum_{i=1}^k K_g(v,e^i) \textup{diam}(g)^2\geq -\epsilon. \]
		\item A manifold $M$ has Riemannian metrics of \emph{almost $k$-non-negative Ricci curvature}, if for all $\epsilon>0$ there exists a Riemannian metric $g$ on $M$ such that for all orthonormal $k$-frames $(e^1,\dots,e^k)$ we have
		\[ \sum_{i=1}^k Ric_g(e^i,e^i) \textup{diam}(g)^2\geq -\epsilon. \]
	\end{enumerate}
\end{definition}

\begin{proof}[Proof of Corollary \ref{almost-nonnegative}]
	We only consider case (i) as the proof in case (ii) will be entirely analogous.
	
	Recall that in the proof of Theorem \ref{main}, to obtain a $C^1$-metric, for every suitably small $\epsilon>0$ we replace  the $C^0$-metric $h_1\cup_X h_2$ on $M_1\cup_X M_2$ by a metric $g$ that agrees with $h_1$ and $h_2$ outside of the $\epsilon$-neighbourhood of $X$, and is of the form $dt^2+g(t)$ otherwise. Here we identify the $\epsilon$-neighbourhood of $X$ with $X \times [-\epsilon,\epsilon] $. As $h_1\cup_X h_2$ restricted to this neighbourhood is of the form $dt^2+h_i(t)$, and since $g(t)=h_i(t)+O(\epsilon),$ (with $i=1$ for $t\leq 0$ and $i=2$ for $t\geq 0$), it follows that the metric $g$ converges to $h_1\cup_X h_2$ as $\epsilon\to 0$ in a $C^0$-sense. Hence,
	\begin{equation}\label{eq:diam}
		\textup{diam}(g)\to \textup{diam}(h_1\cup_X h_2)
	\end{equation}
	as $\epsilon\to 0$. This result carries over to the $C^\infty$-smoothing of the metric, since the smoothing is continuous in a $C^1$-sense.
	
	Now let $\delta>0$. By assumption we have $Ric_k\geq 0>-\frac{\delta}{2d^2}$ for $h_1$ and $h_2$, with $d=\textup{diam}(h_1\cup_X h_2)$. Hence, by Corollary \ref{negative_lower_bound}, we can smoothen the metric $h_1\cup h_2$ on an $\epsilon$-neighbourhood of $X$ to obtain a smooth metric $g$ with $Ric_k>-\frac{\delta}{2d^2}$ for all $\epsilon>0$ sufficiently small. Hence, for all orthonormal $(k+1)$-frames $\{v,e^1,\dots,e^k\}$, we have by \eqref{eq:diam},
	\[\sum_{i=1}^{k}K_g(v,e^i)\textup{diam}(g)^2>-\frac{\delta}{2d^2}\textup{diam}(g)^2\to -\frac{\delta}{2}>-\delta \]
	as $\epsilon\to 0$. Hence, for $\epsilon$ sufficiently small, we have
	\[\sum_{i=1}^{k}K_g(v,e^i)\textup{diam}(g)^2>-\delta. \]
\end{proof}

 \subsection{Proof of Corollaries \ref{tot_geodesic} and \ref{double}}\label{doubling_corollaries}

	\begin{proof}[Proof of Corollary \ref{tot_geodesic}]
		By Theorem \ref{main}, we can glue $(M,g)$ to a copy of itself along the boundary and obtain a metric of $Ric_k>0$ (resp.\ $Sc_k>0$). In this situation, where both manifolds are identical, it is easily checked that the metric $g_t$ defined in equation (\ref{eq:g(t)}) is symmetric about $t=0$, and it follows from this that the smooth interpolating metric constructed in the proof of Theorem \ref{main} has the same symmetry. Thus we have an isometric $\Z/2$-action on $M\cup_\partial M$ that interchanges the two copies of $M$. The fixed point set of this action is the common glued boundary $\partial M \subset M\cup_\partial M$, and this is therefore totally geodesic. Hence, by cutting $M\cup_\partial M$ along $\partial M$, we obtain a metric of $Ric_k>0$ (resp.\ $Sc_k>0$) on $M$ with totally geodesic boundary.
	\end{proof}
	\begin{remark}
		Note that the metric constructed in the proof of Corollary \ref{tot_geodesic} in fact satisfies a stronger condition: It is \emph{doubling}, i.e.\ it extends to a smooth metric on $M\cup_{\partial} M$. 
	\end{remark}

	\begin{proof}[Proof of Corollary \ref{double}]
		Let $M^n$ be a compact manifold such that the inclusion $\partial M\hookrightarrow M$ is $d$-connected for some $1\leq d\leq n-4$. By \cite[Theorem 3]{Wa}, since $d\leq n-4$, $M$ can be obtained from $[0,1]\times\partial M$ by only attaching handles of index at least $d+1$. Turning the handle decomposition upside down, it follows that $M$ is obtained (from the empty set) by only attaching handles of index at most $n-d-1$. By \cite{Sha2}, $M$ admits a metric of positive sectional curvature such that $\II_{\partial M}$ is $(n-d)$-positive. The claim now follows from Theorem \ref{main} (ii).
	
		If $M^n$ is a closed, $d$-connected manifold, then for $M\setminus (D^n)^\circ$ the inclusion of the boundary is $d$-connected, so we can apply the first part of the theorem.
	\end{proof}

	\begin{remark}\label{bordism}
		By using the surgery results of Wolfson \cite{Wo}, we can give an alternative proof of Corollary \ref{double} as follows:
		
		First note that the manifold $M\cup_{\partial} M$ is the boundary of $M\times[0,1]$. We now show that the inclusion $M\cup_{\partial} M\hookrightarrow M\times[0,1]$ is $(d+1)$-connected. Indeed, the composition
		\begin{equation}\label{double_eq}
			 M \times [0,1] \to M\hookrightarrow M\cup_{\partial} M\hookrightarrow M\times[0,1],
		\end{equation}
		where the first map is projection and the second is inclusion into one of the two factors, is homotopic to the identity. Since inclusion gives an isomorphism $H_i(\partial M)\to H_i(M)$ for $i\leq d-1$, the map $H_i(M\cup_\partial M) \to H_{i-1}(\partial M)$ in the Meyer--Vietoris sequence is the zero map for all $i \le d$, and it follows that the map $H_i(M)\oplus H_i(M) \to H_i(M\cup_\partial M)$ in that sequence is surjective for $i \le d$. 

It follows from the surjectivity of $H_i(\partial M)\to H_i(M)$ for $i \le d$ that the kernel of the map $H_i(M)\oplus H_i(M)\to H_i(M\cup_\partial M)$ is the diagonal. Consequently, by the definition of this last map, we now immediately deduce that the inclusion $H_i(M) \to H_i(M \cup_\partial M)$ is surjective. As both the first map in \eqref{double_eq} and the whole composition induce isomorphisms in homology, we see that the composition of the second and third maps does likewise, and hence $M \hookrightarrow M\cup_\partial M$ induces injections in homology. Thus $H_i(M) \to H_i(M \cup_\partial M)$ is an isomorphism for all $i \le d,$ and therefore the same is true of $H_i(M\cup_{\partial}M)\to H_i(M\times[0,1])$. It now follows from the Hurewicz theorem that provided $M$ is simply-connected, the inclusion $M\cup_{\partial}M\hookrightarrow M\times[0,1]$ is $(d+1)$-connected.

		If $M$ is non-simply-connected, we first show that the inclusion $M\cup_\partial M\hookrightarrow M\times[0,1]$ induces an isomorphism on fundamental groups. Since the composition \eqref{double_eq} is homotopic to the identity, we obtain that the map $\pi_1(M\cup_{\partial}M)\to \pi_1(M\times[0,1])$ is surjective. By van Kampen's theorem, we have an isomorphism
		\[ \pi_1(M)*_{\pi_1(\partial M)}\pi_1(M)\cong  \pi_1(M\cup_{\partial} M). \]		
		Since the induced map $\pi_1(\partial M)\to \pi_1(M)$ is surjective, it follows that the inclusion of each factor of $M$ into $M\cup_{\partial} M$ induces an isomorphism of fundamental groups. Hence, the second map in \eqref{double_eq} induces an isomorphism on fundamental groups, and since the composition of maps in \eqref{double_eq} induces an isomorphism, the map $\pi_1(M\cup_{\partial}M)\to \pi_1(M\times[0,1])$ is an isomorphism as well.

		We now consider the universal cover $\widetilde{M}\xrightarrow{\pi}M$. Then the inclusion $\partial \widetilde{M}\hookrightarrow \widetilde{M}$ is again $d$-connected, which can be seen as follows: First suppose that $\partial\widetilde{M}$ is non-connected and let $\gamma\colon[0,1]\to \widetilde{M}$ connect two connected components of $\partial\widetilde{M}$. We can assume that $\gamma(0)$ and $\gamma(1)$ get mapped to the same point under $\pi$. Then the map $\pi\circ\gamma$ defines an element of $\pi_1(M)$ with basepoint in $\partial M$. By surjectivity of the map $\pi_1(\partial M)\to\pi_1(M)$, there exists a homotopy of $\pi\circ\gamma$ fixing the basepoint, into a loop whose image lies entirely in $\partial M$. Lifting this homotopy to $\partial \widetilde{M}$ gives a homotopy of $\gamma$ fixing the endpoints into a path with image contained in $\partial \widetilde{M}$, which is a contradiction. Hence, $\partial\widetilde{M}$ is connected and the map $\pi_0(\partial\widetilde{M})\to\pi_0(\widetilde{M})$ is a bijection.
			
		Now the map $\pi_1(\partial\widetilde{M})\to\pi_1(\widetilde{M})$ is trivially surjective, showing the claim for $d=1$. If $d>1$, we consider the following commutative diagram:
		\[
		\begin{tikzcd}
			\partial\widetilde{M}\arrow[hook]{r}\arrow{d} & \widetilde{M}\arrow{d}\\
			\partial M\arrow[hook]{r} & M.
		\end{tikzcd}
		\]
	
		Since the vertical maps are covering maps, the induced maps on $\pi_1$ are injective. Since $\pi_1(\partial M)\to\pi_1(M)$ is an isomorphism, it follows from commutativity that $\pi_1(\partial \widetilde{M})$ is trivial. Further, for $i\geq 2$, the vertical maps induce isomorphisms on $\pi_i$, showing that $\pi_i(\partial \widetilde{M})\to\pi_i(\widetilde{M})$ is an isomorphism for $i<d$ and surjective for $i=d$. Hence, the inclusion $\partial\widetilde{M}\hookrightarrow\widetilde{M}$ is $d$-connected.

		By the arguments above, the inclusion $\widetilde{M}\cup_{\partial}\widetilde{M}\hookrightarrow \widetilde{M}\times[0,1]$ is $(d+1)$-connected and since $\widetilde{M}\cup_{\partial}\widetilde{M}$ and $\widetilde{M}\times[0,1]$ are covering spaces of $M\cup_{\partial} M$ and $M\times[0,1]$, respectively, it follows from the long exact sequence of homotopy groups applied to these coverings that the inclusion $M\cup_{\partial}M\hookrightarrow M\times[0,1]$ induces isomorphism on $\pi_i$ for all $2\leq i\leq d+1$. Hence altogether, we obtain that the inclusion $M\cup_{\partial}M\hookrightarrow M\times[0,1]$ is $(d+1)$-connected.
		
		By deleting a disc from the interior of $M\times[0,1]$, we obtain a bordism between $M\cup_{\partial} M$ and $S^n$. As in the proof of Corollary \ref{double}, we conclude that $(M\times[0,1])\setminus (D^n)^\circ$ is obtained from $S^n\times[0,1]$ by only attaching handles of index at most $n-d-2$. By considering the effect on the boundary, it follows that $M\cup_{\partial} M$ is obtained from $S^n$ by surgeries of codimension at least $d+2$. Since the condition $Sc_k>0$ can be preserved under surgeries of codimension at least $n-k+2$ by \cite{Wo} (provided $k<n$), the claim follows.
	\end{remark}

\bigskip

\section{Proof of Theorem \ref{obs}}\label{obs_proof}

The proof of Theorem \ref{obs} follows that of the main theorem in \cite{BWW} very closely. Note that the main theorem in \cite{BWW} is the special case of Theorem \ref{obs} corresponding to $k=n-1,$ i.e. the case of positive Ricci curvature. As in that paper, the first step is to establish a family version of Theorem \ref{main}. This is as follows:

\begin{theorem}\label{family}
Let $\pi_i:E_i \to B$, $i=1,2$ be smooth compact fibre bundles with
fibre $M_i,$ where $\partial M_i \neq \emptyset.$ Suppose that each of
these bundles is equipped with a smoothly varying family of fibrewise
$Ric_k>0$ metrics $\{h_i(b)\}_{b \in B},$ and that with respect
to these metrics there is a smoothly varying family of fibrewise
isometries $\phi:=\{\phi_b\}_{b \in B}$ for the boundary bundles
$\partial \pi_i:\partial E_i \to B$ (with fibre $\partial M_i),$ that
is, $\phi_b: \partial \pi_1^{-1}(b) \cong \partial \pi_2^{-1}(b)$ for
each $b \in B.$ Then provided the sum of the second fundamental forms of corresponding fibre boundaries is positive definite after identification by the appropriate $\phi_b$, 
the fibrewise $C^{0}$-metric $h:=\{h_1(b)\cup_{\phi(b)} h_2(b)\}_{b \in B}$ on $E_1\cup_{\phi}E_2$ can be smoothed within fibrewise $Ric_k>0$ in such a way that the resulting metric agrees with the original outside a
neighbourhood of the glued boundaries.
\end{theorem}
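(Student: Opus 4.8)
The plan is to observe that the entire construction in the proof of Theorem \ref{main} is canonical: the interpolating metric $g$ is produced from $h_1,h_2$ by an explicit, formula-driven recipe — the cubic spline \eqref{eq:g(t)} followed by the mollification of Lemma \ref{smoothing} — and every step depends smoothly (indeed $C^2$, as already noted after \eqref{eq:g(t)}) on the input metrics together with their normal derivatives along the boundary. Consequently, carrying out this recipe fibrewise over $B$ yields a fibrewise metric on $E_1\cup_\phi E_2$ that varies smoothly with $b\in B$. The only genuinely new point is the uniformity of all the $\epsilon$-dependent estimates over the compact base $B$, which I would obtain from compactness.

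First I would reduce, exactly as in \cite[\S2]{BWW}, to the case where $\partial E_1=\partial E_2=:\partial E$ as bundles over $B$ and $\phi_b=\mathrm{id}$ for every $b$; this uses the smoothly varying fibrewise isometries $\phi_b$ to identify the boundary bundles. Next, using the fibrewise normal exponential maps of $h_1(b)$ and $h_2(b)$ — which depend smoothly on $b$ — I would fix a smooth collar of $\partial E$ inside $E_1\cup_\phi E_2$, identified over $B$ with $\partial E\times[-\epsilon,\epsilon]$, so that fibrewise the $C^0$ metric $h_1(b)\cup h_2(b)$ takes the form $dt^2+h_i(b)(t)$ (with $i=1$ for $t\le 0$ and $i=2$ for $t\ge 0$). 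By compactness of $B$ and of the fibres, such a collar exists for some $\epsilon_1>0$ uniformly in $b$. On this collar I would then replace the metric, fibrewise, by the cubic interpolation $dt^2+g_t$ of \eqref{eq:g(t)}, built from $h_i(b)(\pm\epsilon)$ and $h_i'(b)(\pm\epsilon)$. Since these data are smooth in $b$ and the spline formula is polynomial in $t$ with coefficients depending smoothly on them, the resulting family $\{g(b)\}_{b\in B}$ is a smoothly varying family of fibrewise $C^1$ metrics, agreeing with the originals outside the collar.

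It then remains to see that for $\epsilon$ small the fibrewise $C^1$ metric has $Ric_k>0$, and that the $C^\infty$ smoothing preserves this, uniformly in $b$. This is where compactness of $B$ enters decisively: every estimate in Lemmas \ref{g_t&g_t'&g_t''}, \ref{K}, Proposition \ref{K_lower_bound}, Corollary \ref{Ric_k-interpolation} and Proposition \ref{C^1_result} has the form ``(quantity) $=(\text{explicit limit})+O(\epsilon^m)$'' where both the limit and the implied constant are continuous functions of finitely many derivatives of $(h_1(b),h_2(b))$ along $\partial E$; over the compact family these constants are therefore uniformly bounded, and likewise the positivity thresholds $\eta$, $C$ and $\epsilon_0$ appearing in the proof of Proposition \ref{C^1_result} can be chosen uniformly in $b$. (The hypothesis that the sum of second fundamental forms is positive \emph{definite} holds fibrewise and, by compactness, with a uniform lower bound, so the Burdick-type deformation used in the non-family proof is not needed here.) Hence there is a single $\epsilon_0>0$ such that, taking $\epsilon<\epsilon_0$, the fibrewise $C^1$ metric $g(b)$ has $Ric_k>0$ for all $b$. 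Finally, applying the mollification of Lemma \ref{smoothing} to each component function of $g(b)$ in coordinates $(x_1,\dots,x_{n-1},t)$ on $\partial E\times[-\epsilon-\iota,\epsilon+\iota]$ — with a single closeness parameter $\mu$, chosen small enough by compactness to maintain $Ric_k>0$ across the family — produces the desired smoothly varying fibrewise $Ric_k>0$ metric, equal to the original outside a neighbourhood of the glued boundaries. The main obstacle is exactly this bookkeeping of uniformity: one must check that none of the constants in the chain of $\epsilon$-expansions degenerates as $b$ ranges over $B$, which reduces to their continuity in the boundary data together with compactness of the parameter space.
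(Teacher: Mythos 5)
Your proposal is correct and follows essentially the same route as the paper: both arguments observe that the smoothing construction of Theorem \ref{main} is an explicit recipe depending continuously on the input metrics, so that the relevant parameters ($\epsilon$ for the $C^1$ spline and the mollification scale for the $C^\infty$ step) can be chosen uniformly over the compact base, after which the theorem follows by applying Theorem \ref{main} fibrewise. Your additional remarks (reduction to $\phi_b=\mathrm{id}$, the uniform collar, and the observation that the Burdick-type deformation is unnecessary since the second fundamental form hypothesis is already strict) are consistent with, and slightly more detailed than, the paper's proof.
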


\begin{proof}
The proof is analogous to that for \cite[Theorem 10]{BWW}. We observe that the proof of Theorem \ref{main} essentially depends on two parameters $\epsilon$ and $\nu$. More precisely, there is an $\epsilon_0>0$ and depending on $k$ and the original metrics $h_1,h_2$, such that for all $\epsilon \in (0,\epsilon_0)$ the $C^1$-smoothing construction works. For the $C^\infty$ smoothing a second parameter $\nu>0$ is required, which depends on $\epsilon$ as well as $k$ and $h_1,h_2$. For a given $\epsilon$ there is a $\nu_0(\epsilon)$ such that the $C^\infty$-smoothing works for all $\nu \in (0,\nu_0).$ The key point is that the dependence of $\epsilon_0$ on $h_1,h_2$ is continuous, as is that of $\nu_0$ on $\epsilon,h_1,h_2$. As a consequence, uniform choices for $\epsilon$ and $\nu$ can be made for all fibres since the bundles are compact. With a suitable global choice of $\epsilon$ and $\nu$, the theorem now follows immediately by applying Theorem \ref{main} to each of the bundle fibres individually. 
\end{proof}

The proof of Theorem \ref{obs} now follows from a careful examination of the proof of the main theorem in \cite{BWW}. For the convenience of the reader, we will outline the key features of the argument in \cite[\S4]{BWW}. The task is to show that given $\ell \in \N$, certain $S^n$-bundles over $S^{4\ell}$, where $n$ is sufficiently large and odd, admit fibrewise Ricci positive metrics. The bundles in question are so-called `Hatcher bundles': these are smooth non-linear bundles which are homeomorphic but not diffeomorphic to the trivial bundle $S^n \times S^{4\ell} \to S^{4\ell}.$ Such a bundle can be decomposed as a union of two identical disc bundles, and by \cite[Theorem 10]{BWW} it suffices to show that a Hatcher disc bundle can be equipped with a fibrewise Ricci positive metric for which the boundary is convex. 

The metric on each disc fibre is constructed as follows. We view $D^n$ as a product $D^{p+1} \times D^q$ and equip this product with a product metric where each factor is rotationally symmetric with positive sectional curvature. From this product, we then cut out an `ellipsoid' with smooth boundary, taking care to do this in a way that the resulting boundary is convex. This produces the `standard' fibre for the Hatcher disc bundle. To construct the bundle together with its fibrewise metric, we split the base into two hemispheres $S^{4\ell}=D^{4\ell}_+ \cup D^{4\ell}_-$ over which the disc bundle restricts to give trivial bundles. Each fibre is then identified with the standard fibre in a specific way, and the product metric on the standard fibre pulled back. This gives a fibrewise Ricci positive metric for each disc bundle over $D^{4\ell}_\pm,$ and it then remains to show that the clutching map required to construct the Hatcher disc bundle is an isometry of the fibres along $S^{4\ell-1}=\partial D^{4\ell}_\pm.$

\begin{proof}[Proof of Theorem \ref{obs}]
Given the construction used to prove the main theorem in \cite{BWW} as outlined above, we must simply identify the $\ell,n,p,q$ required to make this argument work, and the values of $k$ for which the standard fibre metric used in the construction has $Ric_k>0.$ Below, all page numbers refer to \cite{BWW}.

Given $\ell \in \N,$ we first consider what it means for $n=p+q+1$ to be `sufficiently large' compared to $\ell$ in the context of the constructions in \cite[\S3]{BWW}. The requirements on $p$ in relation to $\ell$ are twofold, and are specified on page 3026: both $\pi_{4\ell-1}\text{O}(p)$ and $\pi_{4\ell-1+p}S^p$ must be independent of $p$. By the Freudenthal suspension theorem, the latter homotopy group will be independent of $p$ provided $p\ge 4\ell+1.$ The former group will be independent of $p$ if $p \ge 4\ell+1$, as a consequence of the long exact sequence in homotopy corresponding to the fibration $\text{O}(p) \to \text{O}(p+1) \to S^p.$ Thus overall we need $p \ge 4\ell+1.$ (So $p \ge 5$.)

As explained on page 3027, we need $q \ge p+1$, hence $q \ge 4\ell+2.$ Thus for given $\ell$, the minimum possible value of $n=p+q+1$ is $4\ell+1+4\ell+2+1=8\ell+4.$, i.e. $n \ge 8\ell+4.$ Notice that this minimum value of $n$ is even, and not odd as required. Thus the minimum admissible value of $n$ is $8\ell+5,$ and so $n \ge 13$. 

Turning our attention to curvature, a product of positive sectional curvature metrics on $D^n=D^{p+1} \times D^q$ has $Ric_k>0$ for $k \ge \max\{p+2,q+1\},$ see for example \cite[Theorem A]{RW2}. Thus for given $\ell$, the minimum possible value of $k$ (corresponding to the strongest intermediate Ricci curvature condition) will occur when $p$ is minimal and $q=p+1$. This gives $k=p+2,$ and $n=2p+2$. This value of $n$ has the wrong parity, but we can fix the situation by setting $q=p+2$, so $n=2p+3$ and $$k \ge \max\{p+2,q+1\}=\max\{p+2,p+3\}=p+3.$$ Moreover, for larger odd values of $n$ we can always set $p=(n-3)/2,$ $q=(n+1)/2$, and this will result in the optimal value of $k$, namely $k \ge (n+3)/2.$ 

In summary, for any given $\ell \in \N$, and any $n \ge 8\ell+5$, there exist $p,q$ satisfying the requirements of the construction in \cite{BWW} which result in a fibrewise $Ric_k>0$ metric on the $S^n$-Hatcher bundle over $S^{4\ell}$ for any $k \ge (n+3)/2.$ Hence there is a non-trivial element in the rational homotopy group of the corresponding observer moduli space of metrics in dimension $4\ell,$ as claimed by the Theorem.
\end{proof}


\bigskip\bigskip

\noindent{\it Philipp Reiser, Department of Mathematics, University of Fribourg, Switzerland,
	Email: philipp.reiser@unifr.ch }\\

\bigskip

\noindent {\it David Wraith, 
	Department of Mathematics and Statistics, 
	National University of Ireland Maynooth, Maynooth, 
	County Kildare, 
	Ireland. 
	Email: david.wraith@mu.ie.}

\end{document}